
\documentclass[11pt,draft]{amsart}
\usepackage{amssymb, amstext, amscd, amsmath}
\usepackage[all]{xy}


%
\makeatletter
\def\@cite#1#2{{\m@th\upshape\bfseries%
[{#1\if@tempswa{\m@th\upshape\mdseries, #2}\fi}]}}
\makeatother
%
\theoremstyle{plain}
\newtheorem{theorem}{Theorem}[section]
\newtheorem{corollary}[theorem]{Corollary}
\newtheorem{proposition}[theorem]{Proposition}
\newtheorem{lemma}[theorem]{Lemma}

\newtheorem{problem}{Problem}
\theoremstyle{definition}
\newtheorem{definition}[theorem]{Definition}
\newtheorem{example}[theorem]{Example}
\newtheorem{remark}[theorem]{Remark}
\theoremstyle{remark}




\newcommand{\bbC}{{\mathbb{C}}}

\newcommand{\bbI}{{\mathbb{I}}}
\newcommand{\bbF}{{\mathbb{F}}}

\newcommand{\bbN}{{\mathbb{N}}}
\newcommand{\bbQ}{{\mathbb{Q}}}

\newcommand{\bbT}{{\mathbb{T}}}
\newcommand{\bbZ}{{\mathbb{Z}}}

\newcommand{\A}{{\mathcal{A}}}
\newcommand{\B}{{\mathcal{B}}}
\newcommand{\C}{{\mathcal{C}}}

\newcommand{\F}{{\mathcal{F}}}
\newcommand{\G}{{\mathcal{G}}}
\renewcommand{\H}{{\mathcal{H}}}

\newcommand{\K}{{\mathcal{K}}}
\renewcommand{\L}{{\mathcal{L}}}

\renewcommand{\O}{{\mathcal{O}}}

\newcommand{\T}{{\mathcal{T}}}

\newcommand{\X}{{\mathcal{X}}}

\newcommand{\Z}{{\mathcal{Z}}}


\newcommand{\fA}{{\mathfrak{A}}}
\newcommand{\fB}{{\mathfrak{B}}}
\newcommand{\fC}{{\mathfrak{C}}}

\newcommand{\fG}{{\mathfrak{G}}}




\renewcommand{\phi}{\varphi}
\newcommand{\upchi}{{\raise.35ex\hbox{\ensuremath{\chi}}}}



\newcommand{\alg}{\operatorname{alg}}

\newcommand{\End}{\operatorname{End}}

\newcommand{\id}{{\operatorname{id}}}

\newcommand{\lat}{\operatorname{lat}}

\newcommand\supp{\mathop{\rm supp}}

\newcommand\Mat{{\bf \mathop{\rm M}}}



\newcommand{\ca}{\mathrm{C}^*}

\newcommand{\Gvertex}{{\mathcal{G}}^{0}}
\newcommand{\Gxvertex}{{\mathcal{G}}^{0}_{v}}
\newcommand{\Gedge}{{\mathcal{G}}^{1}}
\newcommand{\Gxedge}{{\mathcal{G}}^{1}_{v}}
\newcommand{\Ginfty}{{\mathcal{G}}^{\infty}}

\newcommand{\sot}{\textsc{sot}}

\newcommand{\sca}[1]{\left\langle#1\right\rangle}




\begin{document}

\title[Local maps and representations]{Local maps and the representation theory of operator algebras}

\author[E.G. Katsoulis]{Elias~G.~Katsoulis}
\address {Department of Mathematics
\\East Carolina University\\ Greenville, NC 27858\\USA}
\email{katsoulise@ecu.edu}

\thanks{2010 {\it  Mathematics Subject Classification.}
46L08, 47B49, 47L40, 47L65}
\thanks{{\it Key words and phrases:} local derivation, local multiplier, reflexivity, topological graph, tensor algebra, $\ca$-correspondence, Cuntz-Pimsner $\ca$-algebra.}

\maketitle

\begin{abstract}
Using representation theory techniques we prove that various spaces of derivations or one-sided multipliers over certain operator algebras are reflexive. A sample result: any bounded local derivation (local left multiplier) on an automorphic semicrossed product $C(\Omega) \times_{\sigma} \bbZ^{+}$ is a derivation (resp. left multiplier). In the process we obtain various results of independent interest. In particular we show that the finite dimensional nest representations of the tensor algebra of a topological graph separate points.
\end{abstract}
\section{Introduction}

One of the early results of Barry Johnson \cite{Joh} asserts that if $\A$ is a semi simple Banach algebra\footnote{In this paper all Banach algebras have an approximate identity and all linear mappings are necessarily bounded.} and $S:\A \rightarrow \A$ is an operator that leaves invariant all closed left ideals of $\A$, then $S$ is a left multiplier of $\A$. This result has been the source of inspiration for subsequent work, including its recent use by the author \cite{Kat} to cast new light on a familiar behavior for the adjointable operators on a Hilbert $\ca$-module.

It is easy to see that the maps preserving all closed left ideals of a Banach algebra $\A$ coincide with the approximately local left multipliers on $\A$. If $\X$ is a right $\A$-module then a map $S: \A \rightarrow \X$ is said to be an \textit{approximately local  left multiplier} iff for any $A \in \A$ there exists a sequence $\{ X_{A,n} \}_n$ in $\X$ so that $S(A) = \lim_{n} X_{A, n} A$. One of the motivating questions for the present work is to what extend Johnson's Theorem is valid beyond semisimple operator algebras, i.e., for which operator algebras all approximately local left multipliers are left multipliers. Equivalently, we ask for which operator algebras $\A$, the algebra $LM(\A)$ consisting of all left multipliers on $\A$ is reflexive, i.e., $\alg \lat LM(\A) = LM(\A)$. (Here we view $LM(\A)$  as a subalgebra of all bounded operators on $\A$.)

This  line of investigation is not new. Don Hadwin, Jiankui Li and their collaborators \cite{Had, HadK, HadL2, HadL, LP} have been investigating questions of this type for the past 20 years for various reflexive operator algebras through the use of idempotents and their separating spaces.  In particular Hadwin and Li \cite{HadL} have shown that Johnson's Theorem holds for all CSL algebras. Also Bresar, Semrl and others have been investigating local multipliers (and derivations) in various other settings, including the purely algebraic, again through an intricate use of idempotents \cite{Bresar, BS1}. The present paper advocates the more systematic use of representation theory on this line of investigation and adds new examples which were previously inaccessible to the list of operator algebras for which Johnson's Theorem is valid. In particular, Theorem \ref{topolgraphmult} shows that all approximately local  left multipliers for the tensor algebra of a topological graph are actually left multipliers. This applies to various algebras that are currently under investigation, including Peters' semicrossed products, the tensor algebras of multivariable dynamics, subalgebras of Stacey's and Exel's crossed products and the tensor algebras of graphs. Theorem \ref{topolgraphmult} is proven with the use of representation theory and a crucial step in its proof is a result of independent interest: the finite dimensional nest representations of the tensor algebra of a topological graph separate points (Theorem \ref{nestrepn}). This generalizes one of the main results of an earlier work of Davidson and the author \cite[Theorem 4.7]{DavKatProcLMS}.

It turns out that the problem of deciding whether approximately local multipliers are actually multipliers is intimately related to the study of local derivations. The concept of a local derivation was introduced by Dick Kadison in his seminal paper \cite{Kad} and was further studied by Johnson \cite{Joh2}, Larson \cite{Larson}, Larson and Sourour \cite{LarsonSour} and many others. If $\X$ is a bimodule of a Banach algebra $\A$ then a map $\delta: \A \rightarrow \X$ is said to be a \textit{local derivation} iff for every $A \in \A$, there exists derivation $\delta_A$ so that $\delta(A) = \delta_A (A)$. (The obvious approximate version of this definition establishes the concept of approximately local derivation.) The question of whether all approximately local derivations of $\A$ are actually derivations is equivalent to the reflexivity (in the sense of \cite{Larson}) of the space $\Z_1(\A)$ of all derivations and includes as a special case the corresponding one for local derivations. Johnson established the much stronger result that the space $\Z_1(\A, \X)$ of all derivations from a $\ca$-algebra $\A$ into an $\A$-module is reflexive thus generalizing an earlier result of Kadison \cite{Kad}. Hadwin and Li \cite{HadL2} and Crist \cite{Crist} have established the reflexivity for $\Z_1(\A)$ in the case where $\A$ is a CD CSL algebra or a direct limit of finite dimensional CSL algebras respectively. Beyond these two classes, very little is known regarding the reflexivity of $\Z_1(\A)$ for a non-selfadjoint operator algebra $\A$. (Note however \cite{LP}.)

The other central result of this paper, Theorem \ref{mainder}, establishes that approximately local derivations are derivations for a large class of tensor algebras of topological graphs. The proof is more involved than that of Theorem \ref{topolgraphmult} and introduces new tools, including that of an acyclic discrete graph and its associated representations. It turns out that if there are sufficiently many acyclic or transitive discrete graphs associated with a topological graph, then one can build a nice representation theory for its tensor algebra (see Lemmas \ref{idemotrep} and \ref{kernelsquare}). Our theory applies in particular to Peters' semicrossed products $C_0(\Omega ) \times_{\sigma} \bbZ^+$ when $\sigma$ is a homeomorphism on $\Omega$ \cite{Arv, DavKatCrelle, Pet}, to the tensor subalgebras of Exel and Vershik's crossed products by topologically free coverings \cite{BR, BRV, EV}, to tensor algebras of Exel's crossed products by partial automorphisms \cite{Exel} and to various tensor algebras of multivariable dynamical systems \cite{DavKatMem}. In all these cases, approximately local derivations are derivations. We also obtain a similar result for various non-selfadjoint algebras which do not come from $\ca$-correspondences. In particular, Proposition \ref{RFDder} implies that any local derivation on the universal non-selfadjoint algebra generated by $n$ contractions is a derivation.

 Unlike the case of $\ca$-algebras \cite{Joh2} or CSL algebras \cite{HadL}, a local derivation on a tensor algebra $\A$ with range in a Banach $\A$-bimodule need not be a derivation. Counterexamples to demonstrate that behavior are presented in the last section of the paper. The author welcomes this phenomenon and believes that it will lead to computable isomorphism invariants for these algebras. This will be explored in a subsequent work.

\section{Revisiting the General Theory}

In this section we gather various results needed for the rest of the paper. Theorem \ref{basic1} and \ref{basic2} are due to Don Hadwin, Jiankui Li and their collaborators \cite{Had, HadK, HadL2, HadL, LP}. Both results have appeared repeatedly in the literature in various forms and degrees of generality. It turns out that in the form needed here, both have quite elementary proofs (even in the non-unital case) which we include. Examples \ref{nonmult} and \ref{nonder} at the end of the section appear to be new; these are quite crucial for building the counterexamples of the subsequent sections.

\begin{theorem} \label{basic1}
Let $\A$ be a Banach algebra generated \textup{(}as a Banach space\textup{)} by its idempotents and $\X$ be a right Banach $\A$-module. Then any approximately left multiplier from $\A$ into $\X$ is a multiplier. Hence $LM(\A, \X)$ is reflexive.
\end{theorem}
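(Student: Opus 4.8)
The plan is to prove the multiplier identity $S(AB)=S(A)B$ first in the special case where the right‑hand factor is an idempotent, and then to extend it to all of $\A$ by linearity and density. So fix a bounded linear approximately local left multiplier $S\colon\A\to\X$, meaning $S(C)\in\ol{\X C}$ for every $C\in\A$. The heart of the argument is the claim that $S(AE)=S(A)E$ for every $A\in\A$ and every idempotent $E\in\A$. Once this is available, I fix $A$ and note that the two bounded linear maps $B\mapsto S(AB)$ and $B\mapsto S(A)B$ agree on the idempotents of $\A$; since these span a dense subspace by hypothesis and both maps are continuous (multiplication being jointly continuous), they coincide on all of $\A$, which is exactly the multiplier identity.

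To prove the key claim I would invoke the defining property of $S$ twice. First, writing $S(AE)=\lim_n X_n(AE)$ with $X_n\in\X$ and multiplying on the right by $E$, the relation $E^2=E$ gives $S(AE)E=\lim_n X_n AE^2=\lim_n X_n AE=S(AE)$, so that $S(AE)E=S(AE)$. Second, applying the same property to the element $A-AE=A(1-E)$, which lies in $\A$ even when $\A$ is non‑unital (so no passage to a unitization is needed), I write $S(A-AE)=\lim_n Y_n(A-AE)$ and again multiply on the right by $E$; here $(A-AE)E=AE-AE^2=0$, whence $S(A-AE)E=0$. Combining the two computations, $0=S(A-AE)E=S(A)E-S(AE)E=S(A)E-S(AE)$, which is precisely $S(AE)=S(A)E$.

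Having shown that every approximately local left multiplier is a multiplier, reflexivity follows once I identify the approximately local left multipliers with the reflexive cover $\Ref(LM(\A,\X))$. On one hand, any genuine left multiplier $L$ satisfies $L(A)=\lim_\lambda L(e_\lambda A)=\lim_\lambda L(e_\lambda)A\in\ol{\X A}$, using an approximate identity $\{e_\lambda\}$ and the continuity of $L$, so $\ol{LM(\A,\X)\cdot A}\subseteq\ol{\X A}$ for each $A$. Hence any $T$ in the reflexive cover satisfies $T(A)\in\ol{\X A}$ for all $A$, i.e. $T$ is an approximately local left multiplier, and is therefore a multiplier by the first part; the reverse inclusion $LM(\A,\X)\subseteq\Ref(LM(\A,\X))$ is automatic.

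The step I expect to be the real obstacle is choosing the correct variable in which to specialize to idempotents. The tempting move is to make the left factor an idempotent and to prove $S(EA)=S(E)A$; this quickly reduces, via the nilpotent element $EA(1-E)$, to controlling the diagonal corner term $S(EAE)$, and the corner $E\A E$ need not be generated by its idempotents, so the argument stalls. The resolution is to specialize the right factor instead: the identity $(1-E)E=0$ then collapses the obstruction immediately, as in the computation above, and the hypothesis that $\A$ is spanned by idempotents is used only at the very end, in the density step. The one point requiring care is to keep all auxiliary elements inside $\A$ rather than in a unitization, which the form $A-AE$ achieves.
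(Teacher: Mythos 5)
Your proof is correct and takes essentially the same route as the paper's: split off the idempotent via $A = AE + (A-AE)$, use the approximately-local hypothesis to get $S(AE)E = S(AE)$ and $S(A-AE)E = 0$, and finish by density of the linear span of idempotents. The only differences are cosmetic: the paper runs the same computation on $ABP$ and then lets $B$ range over an approximate identity (a step your direct decomposition renders unnecessary), and it leaves the final reflexivity deduction implicit, which you spell out correctly by identifying $\Ref(LM(\A,\X))$ with the approximately local left multipliers.
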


\begin{proof}
Let $S: \A \rightarrow \X$ be an approximate left multiplier. Note that for any $A ,B, P \in \A$ with $P=P^2$,  we have $S(ABP) \in \overline{\X BP}$ and $S(AB(I-P)) \in \overline{\X (B-BP)}$ Therefore
\begin{align*}
S(AB)P&= S(ABP)P+S(AB(I-P))P \\
	 	  &=S(ABP)P= S(ABP).
\end{align*}
Letting $B$ range over an approximate unit for $\A$, we obtain $S(AP)=S(A)P$. Since $\A$ is generated by its idempotents, $S$ is a left multiplier, as desired.
\end{proof}

One can easily see that a slight modification of the above proof also implies that the space $\End_{\A}(\X)$ of right $\A$-module operators on $\X$ is also reflexive. We leave the details to the reader.

\begin{lemma}
Let $\X$ be a Banach $\A$-bimodule for a unital Banach algebra $\A$ and let $\delta: \A \rightarrow \X$ be an approximately local derivation. Then
\[
\delta(PQ)= \delta(P)Q+P\delta(Q)
\]
for any idempotents $P, Q \in \A$.
\end{lemma}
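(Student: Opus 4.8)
The plan is to show that $\Delta:=\delta(PQ)-\delta(P)Q-P\delta(Q)$ vanishes by computing it in the four Peirce corners cut out by $P$ on the left and $Q$ on the right, i.e. by analysing $(I-P)\Delta$, $\Delta(I-Q)$ and $P\Delta Q$ separately. The only structural input available is that, for each fixed $A$, $\delta(A)$ is a limit of values $d(A)$ of honest derivations $d$, that every such $d$ satisfies the exact Leibniz rule, and that $d(R)$ is off-diagonal for an idempotent $R$. I expect the \emph{main obstacle} to be precisely that the derivations approximating $\delta$ at the point $PQ$ carry no information whatsoever about $\delta(P)$ or $\delta(Q)$; hence one cannot match corners by naively taking limits at $PQ$, and the whole point is to reach the cross terms through \emph{auxiliary base points whose defining products vanish}.

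The device that unlocks this is a zero-product observation: if $M\in\A$ satisfies $MQ=0$, then every derivation $d$ obeys $d(M)Q=-Md(Q)$, so that if in addition $(I-P)M=0$ one gets $(I-P)d(M)Q=0$ for \emph{all} $d$; letting $d$ run through derivations local at $M$ and passing to the limit gives $(I-P)\delta(M)Q=0$. Applying this with $M=P(I-Q)$, together with the linear splitting $\delta(P)=\delta(PQ)+\delta(P(I-Q))$ and the elementary vanishing $(I-P)\delta(PQ)(I-Q)=0$ (valid because $(I-P)d(PQ)(I-Q)=0$ for every derivation $d$), I would obtain the first corner identity
\[
(I-P)\,\delta(PQ)=(I-P)\,\delta(P)\,Q .
\]
The symmetric argument, using $M'=(I-P)Q$ and $\delta(Q)=\delta(PQ)+\delta((I-P)Q)$, yields $\delta(PQ)(I-Q)=P\,\delta(Q)(I-Q)$. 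These two identities say exactly that $(I-P)\Delta=0$ and $\Delta(I-Q)=0$, so that $\Delta=P\Delta Q$ and only the mixed corner survives.

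For the mixed corner it remains to prove $P\delta(PQ)Q=P\delta(P)Q+P\delta(Q)Q$. Here I would exploit the second corner identity again, but applied to the \emph{complementary} idempotent pair $(P,I-Q)$ in place of $(P,Q)$: since $I-Q$ is an idempotent and $\delta(I-Q)=-\delta(Q)$ (because $\delta(I)=\lim_n d_n(I)=0$), that identity reads $\delta(P(I-Q))Q=P\,\delta(I-Q)\,Q=-P\delta(Q)Q$. Substituting into the linear relation $\delta(PQ)=\delta(P)-\delta(P(I-Q))$ gives
\[
P\,\delta(PQ)\,Q=P\delta(P)Q-P\,\delta(P(I-Q))\,Q=P\delta(P)Q+P\delta(Q)Q ,
\]
whence $P\Delta Q=0$ and therefore $\Delta=0$, which is the assertion.

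At every step the passage from derivations to $\delta$ is harmless: each identity is first an algebraic identity for arbitrary derivations $d$, and is inherited by $\delta$ by writing $\delta$ at the relevant single point as a limit of such $d$ and using continuity of multiplication by the fixed idempotents $P,Q,I-P,I-Q$; thus the approximate versus exact distinction plays no role. The conceptual crux, and the step I expect to require the most care, is the re-reading of the troublesome $P\,\cdots\,Q$ cross term as a boundary corner for the complementary pair $(P,I-Q)$, since that is what finally couples $\delta(PQ)$ to the separately uncontrolled quantities $\delta(P)$ and $\delta(Q)$.
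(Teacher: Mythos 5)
Your proof is correct and takes essentially the same route as the paper's: both arguments rest on the corner-vanishing identities obtained by applying the Leibniz rule of the locally approximating derivations at the base points $PQ$, $(I-P)Q$ and $P(I-Q)$ (your ``second corner identity'' $\delta(PQ)(I-Q)=P\delta(Q)(I-Q)$ is exactly the paper's first key equation), and both capture the remaining cross term by reapplying that identity to the complementary pair $(P,I-Q)$ together with $\delta(I)=0$. The only difference is cosmetic: your first corner identity $(I-P)\Delta=0$ is redundant, since your mixed-corner computation in fact yields the full relation $\delta(PQ)Q=\delta(P)Q+P\delta(Q)Q$ (no left factor of $P$ is needed), which combined with $\Delta(I-Q)=0$ already gives $\Delta=0$ --- precisely how the paper concludes.
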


\begin{proof}
Since $\delta$ is an approximately local derivation,  \[
P^{\perp}\delta(PQ)Q^{\perp}=P\delta(P^{\perp}Q)Q^{\perp}=0,
\] where $P^{\perp}\equiv I-P$ and similarly for $Q^{\perp}$. Hence
\begin{equation} \label{der1}
\delta(PQ)Q^{\perp}=P\delta(PQ)Q^{\perp}
                =P\delta(Q)Q^{\perp}
\end{equation}
and similarly
\[
\delta(PQ^{\perp})Q= P\delta(Q^{\perp})Q.
\]
By replacing in the above equation $Q^{\perp}$ with $I-Q$ and then distributing, we obtain
\[
\delta(P)Q -\delta(PQ)=P\delta(I)Q-P\delta(Q)Q
\]
and since $\delta(I)=0$,
\begin{equation} \label{der2}
\delta(PQ)Q =\delta(P)Q + P\delta(Q)Q.
\end{equation}
The conclusion follows by adding (\ref{der1}) and (\ref{der2}).
\end{proof}

\begin{theorem} \label{basic2}
Let $\A$ be a Banach algebra generated by its idempotents and $\X$ an Banach $\A$-bimodule. Then $\Z_1(\A, \X)$ is reflexive.
\end{theorem}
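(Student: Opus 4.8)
The content of the theorem is that every approximately local derivation $\delta \colon \A \rightarrow \X$ is in fact a derivation; equivalently, that the Leibniz identity $\delta(AB)=\delta(A)B+A\delta(B)$ holds for all $A,B\in\A$. The plan is to first establish this identity when $A$ and $B$ are idempotents---which is exactly the content of the preceding Lemma---and then to propagate it to all of $\A$ by bilinearity followed by a density/continuity argument, using the standing hypotheses that $\A$ is the closed linear span of its idempotents and that every linear map in sight is bounded.

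Assume first that $\A$ is unital. For idempotents $P,Q\in\A$ the Lemma gives $\delta(PQ)=\delta(P)Q+P\delta(Q)$. Since $\delta$ is linear, fixing $A=\sum_i\lambda_iP_i$ a finite linear combination of idempotents and letting $Q$ be an idempotent yields
\[
\delta(AQ)=\sum_i\lambda_i\bigl(\delta(P_i)Q+P_i\delta(Q)\bigr)=\delta(A)Q+A\delta(Q),
\]
and a second such expansion in the right variable shows $\delta(AB)=\delta(A)B+A\delta(B)$ whenever $A,B$ both lie in the linear span $\D$ of the idempotents of $\A$. Now $\D$ is dense in $\A$; given arbitrary $A,B\in\A$ I would choose $A_k,B_k\in\D$ with $A_k\to A$ and $B_k\to B$. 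Because $\delta$ is bounded and multiplication and the two module actions are jointly continuous, passing to the limit in $\delta(A_kB_k)=\delta(A_k)B_k+A_k\delta(B_k)$ gives the Leibniz identity for $A,B$. Hence $\delta$ is a derivation and $\Z_1(\A,\X)$ is reflexive in the unital case.

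For general (possibly non-unital) $\A$ I would reduce to the unital case by passing to the unitization $\A^{+}=\A\oplus\bbC I$. One makes $\X$ a unital Banach $\A^{+}$-bimodule via $(A+\lambda I)x=Ax+\lambda x$ and $x(A+\lambda I)=xA+\lambda x$, and extends $\delta$ to $\delta^{+}\colon\A^{+}\to\X$ by $\delta^{+}(A+\lambda I)=\delta(A)$. Each derivation $D$ of $\A$ extends to a derivation $D^{+}$ of $\A^{+}$ with $D^{+}(A+\lambda I)=D(A)$ (in particular $D^{+}(I)=0$), so an approximation $\delta(A)=\lim_n D_n(A)$ furnished by approximate locality becomes $\delta^{+}(A+\lambda I)=\lim_n D_n^{+}(A+\lambda I)$; thus $\delta^{+}$ is again an approximately local derivation. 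Since $\A^{+}$ is generated by its idempotents---those of $\A$ together with $I$---the unital case applies to $\delta^{+}$, and its restriction $\delta$ is therefore a derivation.

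The identity on idempotents is already supplied by the Lemma and the bilinear extension is purely formal, so the substance of the argument lies elsewhere. The step I expect to require the most care---and the one I would write out in full---is the non-unital reduction: one must verify that the extended map $\delta^{+}$ remains \emph{approximately} local rather than merely local, which relies on identifying the derivations of $\A^{+}$ as the unit-annihilating extensions of the derivations of $\A$, together with the boundedness of $\delta$ to justify passing to the limit along idempotent approximants in the density argument.
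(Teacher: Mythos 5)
Your proof is correct and takes essentially the same route as the paper's: the paper likewise passes to the unitization by setting $\hat{\delta}(I)=0$, invokes the preceding Lemma to get the Leibniz identity on idempotents, and concludes because $\A$ is spanned by its idempotents. The only difference is one of detail---you write out the bilinear extension, the continuity/density step, and the verification that the unitized map is still approximately local, all of which the paper leaves implicit.
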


\begin{proof}
Extend $\delta$ by linearity to an approximate derivation $\hat{\delta}$ of the unitazation $\A_1$ of $\A$, by setting $\hat{\delta}(I)=0$. Then the extension $\hat{\delta}$, and therefore $\delta$ itself, satisfies the conclusion of the above result for $P , Q \in \A$. Since $\A$ is generated by its idempotents, the result follows.
\end{proof}

In the papers \cite{Had, HadK, HadL2, HadL, LP, Samei} the reader will find more general results regarding local maps on Banach algebras. We now present some counterexamples which demonstrate that even in the simplest of situations, local maps may fail the corresponding global property.

\begin{example} \label{nonmult}
If
\[
\fA= \left\{
\begin{pmatrix}
\lambda & \mu \\
0 &\lambda
\end{pmatrix} \mid \lambda, \mu \in \bbC
\right\} = \left\{ \lambda I + \mu E_{12} \mid \lambda, \mu \in \bbC
\right\}
\]
then
\[
S_{\fA}:\fA \longrightarrow \fA:
\lambda I + \mu E_{12} \longmapsto
\lambda I + 2 \mu E_{12}
\]
is a local multiplier which is not a multiplier.

Indeed if $\lambda \neq 0$ then $S_{\fA}(\lambda I + \mu E_{12}) = (I + \mu / \lambda E_{12})(\lambda I + \mu E_{12})$ or otherwise $S_{\fA}(\mu E_{12}) = 2I (\mu E_{12}) $. This shows that $S_{\fA}$ is a local multiplier. It is easy to see that in the case $\lambda \neq 0$, the factor $(I + \mu / \lambda E_{12})$ is uniquely determined by $\lambda I + \mu E_{12}$ and so $S_{\fA}$ cannot be a multiplier.
\end{example}

\begin{example} \label{nonder}
Let $\{ E_{ij}\}_{i, j =1}^{3}$ be the standard matrix unit system of $\Mat_3 (\bbC)$ and let $N = E_{12} +E_{23}$.

If
\[
\fB =
\left\{
\begin{pmatrix}
\lambda & \mu &\nu\\
0 &\lambda &\mu \\
0 & 0 &\lambda
\end{pmatrix} \mid \lambda, \mu , \nu \in \bbC
\right\} = \left\{ \lambda I + \mu N + \nu N^2 \mid \lambda, \mu , \nu\in \bbC
\right\}
\]
then
\[
\delta_{\fB}:\fB \longrightarrow \fB:
\lambda I + \mu N + \nu N^2  \longmapsto \nu N^2
\]
is a local derivation which is not a derivation.

Indeed, it is easy to see that the mappings
\[
\delta_1:\fB \longrightarrow \fB:
\lambda I + \mu N + \nu N^2  \longmapsto \mu N^2
\]
and
\[
\delta_2:\fB \longrightarrow \fB:
X \longmapsto (E_{11} +E_{22})X - X(E_{11} +E_{22})
\]
are derivations. Furthermore, if $X= \lambda I + \mu N + \nu N^2$, then \[
\delta_{\fB}(X) = \left\{ \begin{array}{cl}
(\nu /\mu )\, \delta_1(X) &\mbox{ if $\mu\neq 0$} \\
\delta_2(X) &\mbox{ otherwise}.
\end{array} \right.
\]
Therefore $\delta_{\fB}$ is a local derivation.

However, $\delta_{\fB}$ is not a derivation. Indeed, if $X= \lambda I + \mu N + \nu N^2$ and $X'= \lambda' I + \mu' N + \nu' N^2$, then the $(1,3)$-entry of \[\delta_{\fB}(XX')-\delta_{\fB}(X)X'-X\delta_{\fB}(X')\] is equal to $\mu\mu' $, which is not equal to $0$ in general.
\end{example}

\begin{example}[Crist \cite{Crist}] \label{Crist}
Let $\{E_{ij} \}$ be the
standard matrix unit system for $\Mat_3 (\bbC)$. If
\[
\fC = \bbC I+[E_{12} , E_{13} , E_{23}].
\]
and
\[
\delta_{\fC}:\fC \longrightarrow \fC:
\sum x_{ij} E_{ij} \longmapsto (2x_{13}-x_{12} +x_{23})E_{13}
\]
then $\delta_{\fC}$ is a local derivation which is not a derivation.
\end{example}

\section{Tensor algebras, nest representations and local multipliers}

Davidson and the author have shown in \cite[Theorem 4.7]{DavKatProcLMS} that the finite dimensional nest representations of the tensor algebra of a countable graph separate points. In this section we extend this result to all tensor algebras of topological graphs. This applies in particular to algebras associated to one or more dynamical systems such as Peters' semicrossed products.  Combined with the results of the previous section, this result allows us to obtain definitive information regarding the local multipliers on these algebras.

A $\ca$-correspondence $(X,\A,\phi_X)$ consists of a $\ca$-algebra $\A$, a Hilbert $\A$-module $(X, \sca{\phantom{,},\phantom{,}})$ and a
(perhaps degenerate) $*$-homomorphism $\phi_X\colon \A \rightarrow \L(X)$.

A (Toeplitz) representation $(\pi,t)$ of a $\ca$-correspondence into
a $\ca$-algebra $\B$, is a pair of a $*$-homomorphism $\pi\colon \A \rightarrow \B$ and a linear map $t\colon X \rightarrow \B$, such that
\begin{enumerate}
 \item $\pi(a)t(\xi)=t(\phi_X(a)(\xi))$,
 \item $t(\xi)^*t(\eta)=\pi(\sca{\xi,\eta})$,
\end{enumerate}
for $a\in \A$ and $\xi,\eta\in X$. A representation $(\pi , t)$ is said to be \textit{injective} iff $\pi$ is injective; in that case $t$ is an isometry.

The $\ca$-algebra generated by a representation $(\pi,t)$ equals the closed linear span of $t^n(\bar{\xi})t^m(\bar{\eta})^*$, where for simplicity $\bar{\xi}\equiv (\xi^{1},\dots,\xi^{(n)})\in X^n$ and $t^n(\bar{\xi})\equiv t(\xi_1)\dots t(\xi_n)$.
For any
representation $(\pi,t)$ there exists a $*$-homomorphism
$\psi_t:\K(X)\rightarrow B$, such that $\psi_t(\Theta^X_{\xi,\eta})=
t(\xi)t(\eta)^*$.

It is easy to see that for a $\ca$-correspondence $(X,\A,\phi_X)$ there exists universal Toeplitz representation, denotes as $(\pi_{\infty} , t_{\infty})$, so that any other representation of $(X,\A,\phi_X)$ is equivalent to a direct sum of subrepresentations of $(\pi_{\infty} , t_{\infty})$. We define the Cuntz-Pimsner-Toeplitz $\ca$-algebra $\T_X$ as the $\ca$-algebra generated by all elements of the form $\pi_{\infty}(a), t_{\infty}(\xi)$, $a \in \A$, $\xi \in X$. The algebra $\T_X$ satisfies the following universal property: for any Toeplitz representation $(\pi,t)$ of $X$, there exists a representation $\pi \times t$ of $\T_X$ so that $\pi(a) = \big((\pi \times t)\circ \pi_{\infty}\big)(a)$, $\forall a \in \A$, and $t(\xi) = \big((\pi \times t)\circ t_{\infty}\big)(\xi)$, $\forall \xi \in X$.

\begin{definition}
The \emph{tensor algebra} $\T_{X}^+$ of a $\ca$-correspondence \break
$(X,A,\phi_X)$ is the norm-closed subalgebra of $\T_X$ generated by
all elements of the form $\pi_{\infty}(a), t_{\infty}(\xi)$, $a \in A$, $\xi \in X$.
\end{definition}

It is worth mentioning here that $\T^+_{\X}$ also sits naturally inside the Cuntz-Pimsner algebra $\O_{\X}$ associated with the $\ca$-correspondence $X$; this folloes from \cite{KatsoulisKribsJFA, MS}. As we will not be making essential use of that theory here, we skip the pertinent definitions and results and instead direct the reader to  \cite{KatsoulisKribsJFA, MS} for more details.

The tensor algebras for $\ca$-correspondences were pioneered by Muhly and Solel in \cite{MS}. They form a broad class of non-selfadjoint operator algebras which includes as special cases Peters' semicrossed products \cite{Pet}, Popescu's non-commutative disc algebras \cite{Pop}, the tensor algebras of graphs (introduced in \cite{MS} and further studied in \cite{KaK} and the tensor algebras for multivariable dynamics \cite{DavKatMem}, to mention but a few.

Due to its universality, the Cuntz-Pimsner-Toeplitz $\ca$-algebra $\T_X$ admits a gauge action that leaves $\pi_{\infty}(A)$ elementwise invariant and ``twists" each  $t_{\infty}(\xi)$, $\xi \in X$, by a unimodular scalar. Using this action, and reiterating a familiar trick with the Fejer kernel, one can verify that each element $T \in \T_X^+$ admits a Fourier series expansion
\begin{equation} \label{Cesaro}
T = \pi_{\infty}(a) +\sum_{i=1}^{\infty} \,  t_{\infty}^n(\bar{\xi}_n), \quad a \in A, \, \bar{\xi}_n \in \X^n, n=1,2, \dots,
\end{equation}
where the summability is in the Cesaro sense. (See \cite{Katsura, MS} for more details.)

A broad class of $\ca$-correspondences arises naturally from the concept of a topological graph. A topological graph $\G= (\G^{0}, \G^{1}, r , s)$ consists of two $\sigma$-locally compact\footnote{Due to this assumption, all discrete graphs appearing in this paper are countable.} spaces $\Gvertex$, $\Gedge$, a continuous proper map $r: \Gedge \rightarrow \Gvertex$ and a local homeomorphism $s: \Gedge \rightarrow \Gvertex$. The set $\Gvertex$ is called the base (vertex) space and $\Gedge$ the edge space. When $\Gvertex$ and $\Gedge$ are both equipped with the discrete topology, we have a discrete countable graph (see below).

With a topological graph $\G=  (\G^{0}, \G^{1}, r , s)$ there is a $\ca$-correspondence $X_{\G}$ over $C_0 (\Gvertex)$. The right and left actions of $C_0(\Gvertex )$ on $C_c ( \Gedge)$ are given by
\[
(fFg)(e)= f(r(e))F(e)g(s(e))
\]
for $F\in C_c (\Gedge)$, $f, g \in C_0 (\Gvertex)$ and $e \in \Gedge$. The inner product is defined for $F, G \in C_c ( \Gedge)$ by
\[
\left< F \, | \, G\right>(v)= \sum_{e \in s^{-1} (v)} \overline{F(e)}G(e)
\]
for $v \in \Gvertex$. Finally, $X_{\G}$ denotes the completion of $C_c ( \Gedge)$ with respect to the norm
\begin{equation} \label{norm}
\|F\| = \sup_{v \in \Gvertex} \left< F \, | \, F\right>(v) ^{1/2}.
\end{equation}

When $\Gvertex$ and $\Gedge$ are both equipped with the discrete topology, then the tensor algebra $\T_{\G}^+ \equiv \T^+_{X_{\G}}$ associated with $\G$ coincides with the quiver algebra of Muhly and Solel \cite{MS}. In that case, $\T_{\G}^+$ has a natural presentation which we now describe.

Let $\G=  (\G^{0}, \G^{1}, r , s)$ be a countable discrete graph and let $\Ginfty$ be the path space of $\G$. This consists of all vertices $v \in \Gvertex$ and all paths $p = e_ke_{k-1}\dots e_1$, where the $e_i$  are edges satisfying $s(e_i)=r(e_{i-1})$, $i=1,2, \dots , k$, $k \in \bbN$. (Paths of the form $p = e_ke_{k-1}\dots e_1$ are said to have length $|p|=k$ and vertices are called paths of length $0$.) The maps $r$ and $s$ extend to $\Ginfty$ in the obvious way, two paths $p_1$ and $p_2$ are composable whenever $s(p_2)=r(p_1)$ and in that case, the composition $p_2p_1$ is just the concatenation of $p_1$ and $p_2$. Let $\{\xi_p\}_{p \in \Ginfty}$ denote the usual orthonormal basis of the Fock space $\H_{\G} \equiv l^2( \Ginfty )$, where $\xi_p$ is the characteristic function of $\{p\}$. The left creation operator $L_q$, $q \in \Ginfty$, is defined by
\[
L_q \xi_p =
 \left\{ \begin{array}{cl}
\xi_{qp} &\mbox{ if $s(q) = r(p)$} \\
0 &\mbox{ otherwise}.
\end{array} \right.
\]
By \cite[Corollary 2.2]{FowR}, the algebra generated by $\{ L_p \mid p \in \Ginfty\}$ is (completely isometrically) isomorphic to $\T_{\G}^+$. Its weak closure, denoted as $\L_{\G}$, is the familiar free semigroupoid algebra of $\G$, first studied by Kribs and Power~\cite{KrP}.

Another class of examples for topological graphs arises from multivariable dynamical systems. If $\Omega$ is a $\sigma$-locally compact Hausdorff space equipped with $n$ proper continuous self maps $\sigma = (\sigma_1, \sigma_2 , \dots, \sigma_n )$ then let $\Gvertex = \Omega$, $\Gedge = \{1, 2, \dots , n\} \times \Omega$, let $s: \Gedge \rightarrow \Gvertex$ be the natural projection and let $r(i, x)=\sigma_i(x)$. This defines a topological graph denoted as $\G(\Omega , \sigma)$ and such graphs are naturally associated to various familiar operator algebras. Indeed in the case of a single variable dynamical system $(\Omega, \sigma)$, the tensor algebra $\T_{\G(\Omega , \sigma)}^+$ is completely isometrically isomorphic to Peter's semicrossed product $C_0(\Omega) \times_{\sigma} \bbZ^+$ \cite{Pet}, which was first studied by Arveson \cite{Arv}. This is the universal non-selfadjoint operator algebra generated by a copy of $C_0(\Omega) $ and an isometry $V$, subject to the covariance relations $fV= V (f\circ \sigma)$ , $ f \in C_0(\Omega)$. For more general dynamical systems $(\Omega, \sigma)$, the tensor algebra $\T_{\G(\Omega , \sigma)}^+$, usually denoted as $\T^+(\Omega, \sigma)$, was first studied by Davidson and the author in \cite{DavKatMem}.

The following generalizes one of the main results of an earlier work of Davidson and the author \cite[Theorem 4.7]{DavKatProcLMS} and it is essential for the main result of this section, Theorem \ref{topolgraphmult}.

 \begin{theorem} \label{nestrepn}
If $\G=  (\G^{0}, \G^{1}, r , s)$ is a topological graph, then the finite dimensional nest representations of its tensor algebra $\T_{\G}^{+}$ separate points.
\end{theorem}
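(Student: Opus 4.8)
The plan is to separate points by building, for each nonzero $T \in \T_\G^+$, an explicit finite dimensional nest representation $\rho$ with $\rho(T) \neq 0$, arranged so that the lowest Fourier coefficient of $T$ is read off a single corner matrix entry. Concretely, I would first use the gauge action and the expansion (\ref{Cesaro}) to recover the Fourier coefficients $\bar\xi_m \in X^m$ of $T$, let $k$ be the least index with $\bar\xi_k \neq 0$, and aim to produce $\rho$ on $\bbC^{k+1}$ whose $(k,0)$-entry picks out exactly $\bar\xi_k$. Since $\rho$ is to be built from a single path, the degenerate case $k=0$ (a nonzero diagonal coefficient detected by a one-dimensional evaluation representation) is the trivial instance of the same construction.

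Next I would locate the path. Evaluation of a section in $X^k$ at a fixed length-$k$ path $p$ is a contractive functional, because $|\bar\xi_k(p)|^2$ is dominated by the defining norm (\ref{norm}) of $X^k$; hence it is well defined on the completion and, since $\bar\xi_k \neq 0$, some length-$k$ path $p_* = e_k e_{k-1}\cdots e_1$ satisfies $\bar\xi_k(p_*)\neq 0$. Write $v_0 = s(e_1)$ and $v_j = r(e_j)$ for $j=1,\dots,k$, so that $s(e_j)=v_{j-1}$ and $r(e_j)=v_j$ along $p_*$.

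On $\bbC^{k+1}$ with orthonormal basis $f_0,\dots,f_k$ I would take the covariant pair $\sigma(g)=\diag\big(g(v_0),\dots,g(v_k)\big)$ and $T(\xi)=\sum_{i=1}^{k}\xi(e_i)\,E_{i,i-1}$. The formula for the left action gives left covariance $T(\phi_X(g)\xi)=\sigma(g)T(\xi)$, and under the identification $X_\G\otimes_\sigma\bbC^{k+1}\cong\bigoplus_i \ell^2\big(s^{-1}(v_i)\big)$ the associated map sends $\xi\otimes f_i\mapsto \xi(e_{i+1})f_{i+1}$; since $e_{i+1}\in s^{-1}(v_i)$ one has $|\xi(e_{i+1})|^2\le \langle\xi,\xi\rangle(v_i)=\|\xi\otimes f_i\|^2$, so this map is a contraction and $T$ is completely contractive. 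By the universal property of the tensor algebra \cite{MS}, $\rho:=\sigma\times T$ is then a genuine representation of $\T_\G^+$ with $\rho(\pi_\infty(g))=\sigma(g)$ and $\rho(t_\infty(\xi))=T(\xi)$.

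Finally, $\rho$ is a nest representation and does not kill $T$. Choosing $\xi_0\in C_c(\Gedge)$ with $\xi_0(e_i)\neq 0$ for all $i$ makes $\rho(t_\infty(\xi_0))$ a single weighted nilpotent Jordan block, whose invariant subspaces are exactly $\N_j=\spn\{f_j,\dots,f_k\}$; as every operator in $\rho(\T_\G^+)$ (diagonal or index-raising) preserves this flag, $\lat \rho(\T_\G^+)=\{\N_j\}$ is a nest. For nonvanishing, each $\rho(t_\infty^m(\cdot))$ raises the basis index by exactly $m$, so the $(k,0)$-entry of $\rho(T)$ receives a contribution only from the degree-$k$ part and equals $\bar\xi_k(p_*)\neq 0$ (the diagonal and lower-degree parts miss the corner, and degrees $>k$ vanish on $\bbC^{k+1}$); passing to the Cesàro limit and using boundedness of $\rho$ confirms $[\rho(T)]_{k,0}=\bar\xi_k(p_*)\neq 0$. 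The step I expect to be the real obstacle is the correspondence-theoretic bookkeeping — identifying the interior tensor power $X^k$ with sections over the length-$k$ path space, checking that evaluation at $p_*$ is the contractive functional computing $[\rho(t_\infty^k(\bar\xi_k))]_{k,0}$, and verifying the complete contractivity above — for which one invokes Katsura's description \cite{Katsura} of the topological-graph correspondence; once these identifications are in place, letting $T$ range over $\T_\G^+\setminus\{0\}$ yields the claim.
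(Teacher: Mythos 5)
Your proof is correct, but it takes a genuinely different route from the paper's. The paper never constructs the finite dimensional representations directly: for each $v \in \G^0$ it builds an infinite dimensional Fock-type representation $\phi_v = \pi_v \times t_v$ over an associated \emph{discrete} graph $\G_v$, proves that $\bigoplus_{v} \phi_v$ is faithful on the Toeplitz--Cuntz--Pimsner algebra $\T_{\G}$ by checking the hypotheses of Katsura's Gauge-Invariant Uniqueness Theorem, verifies that $\overline{\phi_v(\T_{\G}^+)}^{\textsc{wot}} = \L_{\G_v}$ is a free semigroupoid algebra, and then quotes \cite[Theorem 4.7]{DavKatProcLMS}: the weakly continuous finite dimensional nest representations of free semigroupoid algebras separate points. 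You instead detect each nonzero $T$ with an explicit $(k+1)$-dimensional path compression that reads off its lowest nonzero Fourier coefficient, thereby bypassing both the Gauge-Invariant Uniqueness Theorem and the appeal to \cite{DavKatProcLMS} --- in effect you re-derive, directly in the topological setting, exactly the special case of that theorem which the paper needs. Your route is more elementary and self-contained; the paper's route produces infrastructure on which the rest of the paper depends (the family $\{\phi_v\}$ and the discrete graphs $\G_v$ are precisely what Lemmas \ref{idemotrep} and \ref{kernelsquare} and Theorem \ref{mainder} are built from), together with the stronger structural fact that $\T_{\G}^+$ admits a faithful representation into a product of free semigroupoid algebras.

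Two precisions are in order. First, your pair $(\sigma, T)$ is \emph{not} a Toeplitz representation: the $(i,i)$-entry of $T(\xi)^*T(\eta)$ is $\overline{\xi(e_{i+1})}\,\eta(e_{i+1})$, whereas that of $\sigma(\sca{\xi,\eta})$ is $\sum_{e \in s^{-1}(v_i)} \overline{\xi(e)}\,\eta(e)$, and these differ whenever $v_i$ emits more than one edge. Consequently the universal property of the $\ca$-algebra $\T_{\G}$ is not available to you; the result you must invoke --- and, given your verification that $\tilde{T}$ is a contraction, legitimately can --- is Muhly and Solel's theorem \cite{MS} that completely contractive \emph{covariant} pairs integrate to completely contractive representations of the tensor algebra (right covariance $T(\xi g) = T(\xi)\sigma(g)$ should be recorded as well; it holds by the same computation as left covariance). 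Second, when $p_*$ revisits vertices, e.g., when it winds around a loop, the range of your $\rho$ is a proper, possibly commutative, subalgebra of the lower triangular matrices --- for a single loop it is $\bbC I + \bbC N + \dots + \bbC N^k$, as in Example \ref{nonder} --- so $\rho$ is a nest representation only in the lattice sense that $\lat \rho(\T_{\G}^+)$ is totally ordered, and its range need not be a full nest algebra nor be spanned by idempotents. That lattice sense is the sense in which the theorem and \cite{DavKatProcLMS} are stated, so your argument does prove the statement; it is worth flagging only because downstream applications such as Theorem \ref{topolgraphmult} ask for more from the ranges than your cyclic-path representations provide.
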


\begin{proof}
We will construct a family of representations $(\pi_v , t_v)$, $v \in \Gvertex$, of the correspondence $X_{\G}$ so that the integrated representations $\phi_v\equiv \pi_v \times t_v$ satisfy:
\begin{itemize}
\item[(i)] for each $v \in \Gvertex$ there exists a discrete graph $\G_v$ so that \begin{equation} \label{overl}\overline{\phi_v(\T_{\G}^+)}^{\textsc{ wot}} = \L_{\G_v},\end{equation}
    where $\L_{\G_v}$ is the free semigroupoid algebra associated with $\G_v$
\item[(ii)] the representations $\phi_v$, $v \in \Gvertex$, separate the points in $\T_{\G}^{+}$.
    \end{itemize}
Once this is done, the proof follows from the result of Davidson and Katsoulis \cite[Theorem 4.7]{DavKatProcLMS} that the weakly continuous finite dimensional nest representations of the free semigroupoid algebras separate points.

Fix a $v \in \Gvertex$ and define inductively
\begin{align*}
\G_{v , \, n+1}^{0}&=r\left(\G_{v , \, n}^{1}\right) \\
\G_{v , \, n+1}^{1}&=s^{-1}\left(\G_{v , \, n+1}^{0}\right), \quad n=0, 1, \dots,
\end{align*}

where $\G_{v , 0}^{0}=\{ v \}$ and $\G_{v ,  0}^{1}=s^{-1}(\{ v \})$. If
\[
\G_{v}^{i} = \bigcup_{n\geq0} \G_{v ,  n}^i, \quad i=0,1,
\]
then we define $\G_v \equiv (\Gxvertex , \Gxedge, r , s)$, where $r$ and $s$ are the restrictions on $\Gxedge$ of the corresponding maps coming from $\G= (\G^{0}, \G^{1}, r , s)$. (The family of discrete graphs $\G_v$, $ v \in \G^0$, appearing in this proof is said to be the \textit{family of discrete graphs associated with} $\G$.)

Let $\G_{v}^{\infty}$ be the (finite) path space of $\G_v$ and let
$\H_{\G_v}$ be the Fock Hilbert space associated with $\G_v$. For $f \in C_0 (\Gvertex)$ and $F \in C_c(\Gedge)$ we define
 \begin{align*}
 \pi_v (f)\xi_p &= f(r(p))\xi_p \\
 \mbox{and} \phantom{XXXXXXXXXXXXX}&\phantom{XXXXXXXXXXXXXXXXXXXXX} \\
t_v(F)\xi_p &= \sum_{e \in s^{-1}(r(p) )}\, F(e)\xi_{ep}, \quad p \in \G_{v}^{\infty}
\end{align*}
with the understanding that $r(p) =p$, when $p \in \Gxvertex$.

Clearly,
\[
\| t_v(F)\xi_p \,\|^2 = \| \left< F \, | \, F\right>(r(p)) \|\leq \|F \|
\]
for any $p \in \G_{v}^{\infty}$. Furthermore, if $p, p' \in \G_{v}^{\infty}$ are distinct paths then the vectors $t_v(F)\xi_p$ and $t_v(F)\xi_{p'}$ are orthogonal. Hence $t_v(F)$ extends to a bounded operator on $\H_{\G_v}$. Its adjoint satisfies $t_v^*(F)\xi_w = 0$, when $w \in \Gxvertex$, and
\[
t_v^*(F)\xi_{ep}= \overline{F(e)}\xi_{p}
\]
otherwise. Therefore,
\begin{align*}
t_v(F)^* t_v (G) \xi_p&= t_v(F)^* \Big( \sum_{e \in s^{-1}(r(p) )}\, G(e)\xi_{ep}\Big) \\
&=\sum_{e \in s^{-1}(r(p) )}\,\overline{F(e)} G(e)\xi_{p} \\
&= \left< F \, | \, G \right> (r(p))\xi_p = \pi_v(\left< F \, | \, G \right> ) \xi_p. \end{align*}
A simple calculation also shows that $\pi_v(f)t_v (F)\pi_v(g) = t_v (fFg)$, for all $f ,g\in C_0 (\Gvertex)$ and $F \in C_c(\Gedge)$. Hence $(\pi_v, t_v)$ extends to an isometric covariant representation of the correspondence $X_{\G}$.

Let $\pi \equiv \oplus_{v \in \Gvertex} \pi_v $ and $t \equiv \oplus_{v \in \Gvertex} t_v$. We will show now that the $\ca$-algebra $\ca(\pi, t)$ generated by $\pi(C_0 (\Gvertex))$ and $t(X_{\G})$ is isomorphic to the Toeplitz-Cuntz-Pimsner $\ca$-algebra $\T_{\G}$. From this it will follow that the family of representations $\phi_v$, $v \in \Gvertex$, separates the points in $\T_{\G}^{+}$, since $\T_{\G}^{+} \subseteq \T_{\G}$.

 By the Gauge Invariant Uniqueness Theorem of Katsura \cite[Theorem 6.2]{KatsuraJFA}, it suffices to show that $\pi$ is faithful on $C_0 (\Gvertex)$, the $\ca$-algebra $\ca(\pi , t)$ admits a gauge action and finally that \[
 I\left( \pi , t \right) \equiv \{ f \in C_0 (\Gvertex)\mid  \pi(f) \in \psi_t(K(X_{\G}))\}=0.
 \]

 Since $\pi_v(f)\xi_v=f(v)\xi_v$, $v \in \Gvertex$, it is clear that $\pi$ is faithful. To define a gauge action on $\ca(\pi , t)$ let
 \[
 \alpha_{\lambda}: \ca(\pi , t) \longrightarrow \ca(\pi , t): A \longmapsto U_{\lambda}^* AU_{\lambda}, \quad \lambda \in \bbT,
 \]
 where $U_{\lambda}\xi_{p} = \lambda^{|p|}\xi_{p}$, $p \in \G_{v}^{\infty}$ and $|p|$ equals  the length of the path $p$. Finally, for any $G \in X_{\G}$, we have $t_v(G)^*\xi_v = 0$, for all $v \in \Gvertex$. However if $\pi(f)\in I\left( \pi , t \right)$, then $\pi_v(f)$ can be approximated by linear combinations of elements of the form
 \[
 t_v(F_1)\dots t_v(F_k)t_v(G_1)^*\dots t_v(G_k)^*, \quad k = 1, 2, \dots ..
 \]
Since $t_v (G)^*\xi_v = 0 $, for all $G \in X_{\G}$, we have
\[|f(v)| = \| f(v)\xi_v\| = \|\pi_v(f)\xi_v \| = 0,
\]
for all $v \in \Gvertex$. Hence $f=0$, as desired.

It remains to verify the validity of (\ref{overl}).
Let $v_1, v_2, \dots$ be an enumeration of $\Gxvertex$. Fix an $i \in \bbN$; for each $n \in \bbN$ use Urysohn's Lemma to obtain $f_n =f_{n,i}\in C_0(\Gxvertex)$ with
\[
f_n(v_i)=1, \quad f_n(v_j)=0, \mbox{ for } 1\leq j \leq n, \, j\neq i,
 \]
and $|f_n(v_j)|\leq 1$, for all other $j$. If $\hat{L}_{v_i}$ is any weak limit of $\{ \pi_v(f_n)\}_n$ then $\hat{L}_{v_i}\xi_p = \xi_p$, if $r(p)=v_i$, or otherwise $\hat{L}_{v_i}\xi_p =0$. Hence, $\hat{L}_{v_i}= L_{v_i}$, for all $i$. Furthermore,
\begin{equation} \label{equality}
\overline{\phi_v(\T_{\G}^+)}^{\textsc{ wot}} \ni t_v(F)L_{v_i}=\sum_{e \in s^{-1}(v_i )}\, F(e)L_e \in \L_{\G}.
\end{equation}
However $\sum_{i=1}^{\infty} \, L_{v_i}=I$ and so (\ref{equality}) implies that
\[
t_v(F) =  \sum_{i=1}^{\infty} \, t_v(F) L_{v_i} \in \L_{\G}, \quad F \in C_c(\Gxedge),
\]
i.e., $\overline{\phi_v(\T_{\G}^+)}^{\textsc{ wot}} \subseteq \L_{\G}$.
To obtain the reverse inclusion, let $e_1, e_2, \dots$ be an enumeration of $\Gxedge$. Fix an $i \in \bbN$; for each $n \in \bbN$ use Urysohn's Lemma to obtain $F_n \in C_c(\Gxedge)$ with
\[
F_n(e_i)=1, \quad F_n (e_j)=0, \mbox{ for } 1\leq j \leq n, \, j\neq i,
 \]
and $|F_n(e_j)|\leq 1$, for all other $j$. If $v_{j_i} = s(e_i)$, then again by (\ref{equality}) any weak limit of $\{ t_v(F_n)L_{v_{j_{i}}} \}_{n}$ will equal $L_{e_i}$ and the conclusion follows.
\end{proof}

\begin{remark}
The representations $\phi_v$ in the proof of Theorem \ref{nestrepn} were inspired by the familiar orbit representations (see \cite{DavKatMem, Pet}). However, it is frequent the case that these two classes of representations differ.

Indeed, let $(\Omega, \sigma)$ be a dynamical system on a locally compact Hausdorff space $\Omega$ and recall that Peters' semicrossed product $C_0(\Omega) \times_{\sigma} \bbZ^+$ is the tensor algebra corresponding to the topological graph $\G= (\G^{0}, \G^{1}, r , s)$ with $ \G^{0} =  \G^{1} =\Omega $, $s = \id$ and $r = \sigma$. Let $v \in \Omega$ and let $\H$ be a separable Hilbert space with orthonormal base $\{ \xi_n \}_{n=0}^{\infty}$. The \textit{orbit representation} $\lambda_v$ of $C_0(\Omega) \times_{\sigma} \bbZ^+$ is defined by
\begin{align*}
\lambda_v(f)\xi_n &= f(\sigma^{(n)}(v))\xi_n, \, \, f \in C_0(\Omega),\,  n=0,1,2,\dots ,  \\
 \lambda_v(V)\phantom{.}&=S,
 \end{align*}
where $V \in C_0(\Omega) \times_{\sigma} \bbZ^+$ is the universal isometry and $S$ the forward shift on $\{ \xi_n \}_{n=0}^{\infty}$.

It is easy to see that when $v \in \Omega$ is aperiodic (Definition \ref{varpoints}), then $\lambda_v\left( C_0(\Omega) \times_{\sigma} \bbZ^+\right)$ is $\sot$-dense the nest algebra of all lower triangular infinite matrices with respect to $\{ \xi_n \}_{n=1}^{\infty}$. Therefore,
$\lambda_v \left( C_0(\Omega) \times_{\sigma} \bbZ^+\right)'' = B(\H)$. On the other hand, the range of any of the $\phi_{v'}$, $v'\in \Omega$, constructed above is $\sot$-dense in a free semigroupoid algebra and so \cite[Corollary 4.5]{KrP} implies that $\phi_{v'}\left(C_0(\Omega) \times_{\sigma} \bbZ^+\right)''$ is a free semigroupoid algebra. Since $B(\H)$ is not a free semigroupoid algebra, no such $\phi_{v'}$, $v'\in \Omega$, can be unitarily equivalent to $\lambda_v$, when $v$ is aperiodic.
\end{remark}

In \cite{LP} it was observed that the representation theory of Davidson and the author \cite{DavKatProcLMS} has definitive applications on the theory of local maps for graph algebras. The following extends the results of \cite{LP} to a much broader class of operator algebras.

\begin{theorem} \label{topolgraphmult}
If $\G=  (\G^{0}, \G^{1}, r , s)$ is a topological graph, then any approximately local left multiplier on $\T_{\G}^+$ is actually a left multiplier, i.e., the algebra $LM(\T_{\G}^+)$ of left multipliers on $\T_{\G}^+$ is reflexive.
\end{theorem}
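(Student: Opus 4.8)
The plan is to transport the problem, through point-separating representations, to finite dimensional algebras where Theorem \ref{basic1} applies. The obstacle that makes a direct appeal to Theorem \ref{basic1} impossible is that $\T_\G^+$ is almost never generated by its idempotents (for instance Peters' semicrossed products contain very few idempotents). The images of $\T_\G^+$ under the representations furnished by Theorem \ref{nestrepn} are, however, finite dimensional nest algebras, and every finite dimensional nest algebra \emph{is} spanned by its idempotents: in a basis adapted to the nest it consists of block upper triangular matrices, and each matrix unit $E_{pq}$ with $p\leq q$ is the difference $(E_{pp}+E_{pq})-E_{pp}$ of two idempotents. Thus Theorem \ref{basic1} is available for the images, and the task is to descend.

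Let $S\colon \T_\G^+ \to \T_\G^+$ be an approximately local left multiplier; it is a bounded linear map with $S(A)\in \overline{\T_\G^+ A}$ for every $A$, and in particular it leaves invariant every closed left ideal. Fix a finite dimensional nest representation $\rho\colon \T_\G^+ \to T$ onto a full finite dimensional nest algebra $T$. Its kernel is a closed two-sided, hence left, ideal, so $S(\ker \rho)\subseteq \overline{\T_\G^+\,\ker\rho}\subseteq \ker\rho$. By linearity $S$ therefore descends to a well-defined bounded map $S_\rho\colon T \to T$ determined by $S_\rho(\rho(A))=\rho(S(A))$. Since $\rho$ is a continuous homomorphism, $S_\rho(\rho(A))=\rho(S(A))\in \rho\big(\overline{\T_\G^+ A}\big)\subseteq \overline{T\,\rho(A)}=T\,\rho(A)$, the last equality holding because $T$ is finite dimensional; hence $S_\rho$ is an approximately local left multiplier on $T$.

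Because $T$ is generated by its idempotents, Theorem \ref{basic1} shows $S_\rho$ is a genuine left multiplier on $T$, that is $S_\rho(t_1t_2)=S_\rho(t_1)t_2$. Taking $t_1=\rho(A)$, $t_2=\rho(B)$ and unwinding the definition of $S_\rho$ yields
\[
\rho\big(S(AB)\big)=S_\rho\big(\rho(A)\rho(B)\big)=S_\rho(\rho(A))\,\rho(B)=\rho\big(S(A)\big)\rho(B)=\rho\big(S(A)B\big)
\]
for every such $\rho$ and all $A,B\in\T_\G^+$. As the finite dimensional nest representations separate the points of $\T_\G^+$ by Theorem \ref{nestrepn}, we conclude $S(AB)=S(A)B$, so $S$ is a left multiplier and $LM(\T_\G^+)$ is reflexive.

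The step requiring genuine care is the assertion that the separating representations of Theorem \ref{nestrepn} may be taken onto \emph{full} finite dimensional nest algebras, for this is exactly what licenses the use of Theorem \ref{basic1}. It is essential: a nest representation whose image is only a proper subalgebra of a nest algebra need not have an image spanned by idempotents, and the pathological low-dimensional algebras of Examples \ref{nonmult} and \ref{nonder}, on which local multipliers and derivations fail to be global, arise precisely as such proper subalgebras. I would therefore verify that the representations $\phi_v$ constructed in the proof of Theorem \ref{nestrepn}, after passage to the free semigroupoid algebras $\L_{\G_v}$ and to the finite dimensional nest representations of those algebras, surject onto the ambient nest algebras. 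This is the main obstacle; once it is secured, the descent argument above completes the proof.
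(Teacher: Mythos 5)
Your argument is correct and is essentially the paper's own proof: you descend the approximately local left multiplier through the separating finite dimensional nest representations of Theorem \ref{nestrepn} (the paper phrases the descent via the quotients $\T_{\G}^+/\ker\rho_i$ acting on the images, which is the same thing), apply Theorem \ref{basic1} to the idempotent-generated finite dimensional images, and separate points. The surjectivity you flag at the end as the remaining obstacle requires no further work: it is part of what Theorem \ref{nestrepn} asserts as used in the paper---each $\rho_i(\T_{\G}^+)$ \emph{is} a finite dimensional nest algebra, since the reduction in its proof is to \cite[Theorem 4.7]{DavKatProcLMS}, where the finite dimensional nest representations of free semigroupoid algebras have range equal to the full nest algebra.
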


\begin{proof} Let $S$ be an approximately local multiplier on $\T_{\G}^+$ By the previous Theorem, there exists a separating family of representations
\[
\rho_i : \T_{\G}^+\rightarrow B(\H_i), \quad i \in \bbI
\]
on finite dimensional Hilbert space so that each $\rho_i(\T_{\G}^+)$ is a finite dimensional nest algebra. Furthermore, $\rho_i(\T_{\G}^+)$ is a right $\T_{\G}^+ / \ker \rho_i $-module, with the right action coming from $\rho_i$. Since $S$ preserves closed left ideals we obtain \[
S_i: \T_{\G}^+ / \ker \rho_i \longrightarrow \rho_i (\T_{\G}^+ ); A + \ker \rho_i \longmapsto \rho_i(S(A)), \quad i \in \bbI.
\]
It is easy to verify that $S_i$ is an an approximate left multiplier and so Theorem \ref{basic1} implies that $S_i$ is a actually a left multiplier. Hence $$S_i (AB + \ker \rho_i)= S_i(A+\ker \rho_i)\rho_i(B)$$ and so
\[
\rho_i\big( S(AB)-S(A)B \big) =0, \quad \mbox{for all } i \in \bbI.
\]
Since $\cap_i \ker \rho_i= \{0\}$, the conclusion follows.
\end{proof}

\begin{corollary} \label{multivmult}
Let $(\Omega , \sigma)$ be a multivariable dynamical system and let $\T^+(\Omega , \sigma)$ be its tensor algebra. Then any approximately local left multiplier on $\T^+(\Omega , \sigma)$ is a left multiplier.
\end{corollary}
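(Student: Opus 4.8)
The plan is to observe that this corollary is an immediate specialization of Theorem \ref{topolgraphmult}, so the entire task reduces to exhibiting $\T^+(\Omega,\sigma)$ as the tensor algebra of a genuine topological graph. The excerpt already records the construction: given the multivariable system $(\Omega,\sigma)$ with $\sigma=(\sigma_1,\dots,\sigma_n)$, one sets $\G^0 = \Omega$, $\G^1 = \{1,2,\dots,n\}\times\Omega$, takes $s\colon\G^1\to\G^0$ to be the projection $(i,x)\mapsto x$ and $r\colon\G^1\to\G^0$ to be $(i,x)\mapsto\sigma_i(x)$, producing the graph $\G(\Omega,\sigma)$; and the excerpt asserts that $\T^+(\Omega,\sigma) = \T^+_{\G(\Omega,\sigma)}$ by the definition of $\T^+(\Omega,\sigma)$ in \cite{DavKatMem}. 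So the only genuine verification I would carry out is that $\G(\Omega,\sigma)$ satisfies the axioms of a topological graph, after which Theorem \ref{topolgraphmult} applies verbatim.

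First I would check the $\sigma$-local compactness of the vertex and edge spaces: $\G^0=\Omega$ is $\sigma$-locally compact Hausdorff by hypothesis, and $\G^1=\{1,\dots,n\}\times\Omega$ is a finite disjoint union of copies of $\Omega$, hence also $\sigma$-locally compact. Next I would verify that $s$ is a local homeomorphism: since $s$ restricts to the identity-type homeomorphism of each clopen sheet $\{i\}\times\Omega$ onto $\Omega$, it is a local homeomorphism (indeed a covering map of finite degree). Finally I would verify that $r$ is a continuous proper map: $r$ is continuous because each $\sigma_i$ is, and $r$ is proper because each $\sigma_i$ is proper and $\G^1$ has only finitely many sheets, so the preimage of a compact set is a finite union of compact sets. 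This confirms that $\G(\Omega,\sigma)$ is a topological graph in the sense used in Theorem \ref{nestrepn} and Theorem \ref{topolgraphmult}.

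Having established that $\G(\Omega,\sigma)$ is a topological graph and that $\T^+(\Omega,\sigma)=\T^+_{\G(\Omega,\sigma)}$, I would then simply invoke Theorem \ref{topolgraphmult}: any approximately local left multiplier on $\T^+_{\G(\Omega,\sigma)}$ is a left multiplier, whence the same holds for $\T^+(\Omega,\sigma)$. In truth there is no substantive obstacle here; the work was already done in establishing Theorem \ref{nestrepn} (separation of points by finite dimensional nest representations) and its consequence Theorem \ref{topolgraphmult}. The \emph{only} point requiring any care is the finiteness of the edge space: were one to allow infinitely many maps $\sigma_i$ the properness of $r$ would need re-examination, but for the finite multivariable systems under consideration this is automatic, so the corollary follows at once.
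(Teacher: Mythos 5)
Your proposal is correct and matches the paper's (implicit) argument exactly: the paper states this corollary without proof precisely because it had already constructed $\G(\Omega,\sigma)$ as a topological graph with $\G^0=\Omega$, $\G^1=\{1,\dots,n\}\times\Omega$, $s$ the projection and $r(i,x)=\sigma_i(x)$, identified $\T^+(\Omega,\sigma)$ with $\T^+_{\G(\Omega,\sigma)}$, and then applied Theorem \ref{topolgraphmult}. Your added verification that $\G(\Omega,\sigma)$ satisfies the topological-graph axioms ($s$ a local homeomorphism on clopen sheets, $r$ continuous and proper thanks to finitely many sheets and properness of each $\sigma_i$) is the routine check the paper leaves to the reader, and you carry it out correctly.
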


There is another class of operator algebras associated to a multivariable dynamical system $(\Omega , \sigma)$, the so called semicrossed product $C_0(\Omega) \times_{\sigma} \bbF_n^+$, which was introduced and first studied by Davidson and the author in \cite{DavKatMem}. We do not know whether approximately local left multipliers are left multipliers in general but the following provides an answer in a particular situation.

\begin{proposition} \label{RFD}
Let $(\A , \sigma)$ be an automorphic multivariable $\ca$-dynamical system and assume that the crossed product $\ca$-algebra $\A \times_{\sigma} \bbF_n$ is residually finite dimensional \textup{(RFD)}. Then any approximately local left multiplier on $\A \times_{\sigma} \bbF_n^+$ is a left multiplier.
\end{proposition}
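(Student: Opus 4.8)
The plan is to mirror the proof of Theorem \ref{topolgraphmult}, replacing the separating family of nest representations furnished there by Theorem \ref{nestrepn} with a separating family extracted from the RFD hypothesis. First I would recall that, since $(\A,\sigma)$ is automorphic, the semicrossed product $\A\times_{\sigma}\bbF_n^+$ embeds completely isometrically as a (non-selfadjoint) subalgebra of the full crossed product $\A\times_{\sigma}\bbF_n$, with the generating isometries $V_1,\dots,V_n$ mapping to the unitary generators $u_1,\dots,u_n$ of $\A\times_{\sigma}\bbF_n$ (this is part of the theory developed in \cite{DavKatMem}). The RFD assumption then supplies a separating family of finite dimensional $*$-representations $\{\pi_i\}_{i\in\bbI}$ of $\A\times_{\sigma}\bbF_n$; restricting each $\pi_i$ to the subalgebra yields a family $\rho_i\equiv\pi_i|_{\A\times_{\sigma}\bbF_n^+}$ of finite dimensional representations which still separates points, since $\cap_i\ker\pi_i=\{0\}$ forces $\cap_i\ker\rho_i=\{0\}$.

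The crucial observation, playing the role of the nest-algebra property used in Theorem \ref{topolgraphmult}, is that each image $\rho_i(\A\times_{\sigma}\bbF_n^+)$ is actually a finite dimensional $\ca$-algebra, hence is generated as a Banach space by its idempotents. Indeed, $\rho_i(\A\times_{\sigma}\bbF_n^+)$ is the (automatically closed) algebra generated by the finite dimensional $\ca$-algebra $\pi_i(\A)$ together with the unitaries $\pi_i(u_1),\dots,\pi_i(u_n)$. In a finite dimensional algebra the inverse of any invertible element is a polynomial in that element, so $\pi_i(u_j)^*=\pi_i(u_j)^{-1}$ already lies in this algebra; since $\pi_i(\A)$ is selfadjoint as well, the generating set is closed under adjoints and therefore so is the algebra it generates. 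Thus $\rho_i(\A\times_{\sigma}\bbF_n^+)$ is selfadjoint, and a finite dimensional $\ca$-algebra, being a finite direct sum of matrix algebras, is spanned by its projections.

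With these two facts in hand, the argument of Theorem \ref{topolgraphmult} applies verbatim. Given an approximately local left multiplier $S$ on $\A\times_{\sigma}\bbF_n^+$, each $\ker\rho_i$ is a closed left ideal, so $S(\ker\rho_i)\subseteq\ker\rho_i$ and the map $S_i\colon(\A\times_{\sigma}\bbF_n^+)/\ker\rho_i\to\rho_i(\A\times_{\sigma}\bbF_n^+)$, $A+\ker\rho_i\mapsto\rho_i(S(A))$, is well defined; one checks as before that it is an approximate left multiplier into the right module $\rho_i(\A\times_{\sigma}\bbF_n^+)$. Since $(\A\times_{\sigma}\bbF_n^+)/\ker\rho_i\cong\rho_i(\A\times_{\sigma}\bbF_n^+)$ is generated by its idempotents, Theorem \ref{basic1} shows that $S_i$ is a genuine left multiplier, whence $\rho_i\big(S(AB)-S(A)B\big)=0$ for all $i\in\bbI$. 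As $\cap_i\ker\rho_i=\{0\}$, we conclude that $S(AB)=S(A)B$, i.e.\ $S$ is a left multiplier.

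I expect the main obstacle to be the selfadjointness step: one must guarantee that passing from the selfadjoint crossed product down to the non-selfadjoint subalgebra and then taking a finite dimensional image genuinely lands inside a $\ca$-algebra. This hinges precisely on the two hypotheses: the automorphic assumption forces the generating isometries to become unitaries in the enveloping crossed product (so that their adjoints re-enter the image), and the finite dimensionality supplied by RFD makes those adjoints polynomials in the original operators. Relaxing either hypothesis, by allowing genuinely non-invertible $\sigma_j$ or infinite dimensional representations, would break the reduction to Theorem \ref{basic1}, as Example \ref{nonmult} already signals for algebras that are not spanned by their idempotents.
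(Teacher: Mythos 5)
Your proposal is correct and follows essentially the same route as the paper: use the RFD hypothesis to obtain a separating family of finite dimensional representations of $\A \times_{\sigma} \bbF_n$, restrict to $\A \times_{\sigma} \bbF_n^+$, observe that finite dimensional operator algebras are inverse closed so the adjoints $\pi_i(U_j)^* = \pi_i(U_j)^{-1}$ lie in the image (making it a finite dimensional $\ca$-algebra, hence spanned by idempotents), and then run the argument of Theorem \ref{topolgraphmult} via Theorem \ref{basic1}. The only cosmetic difference is that the paper works with irreducible finite dimensional representations, while you allow arbitrary ones; both yield images generated by idempotents, so nothing changes.
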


\begin{proof} Let $U_1, U_2, \dots , U_n$ be the free unitaries in $\A \times_{\sigma} \bbF_n$ implementing the covariance relations.  Let $\pi_{i}$, $i \in \bbI$, be a family of irreducible representations that separates the points in $\A \times_{\sigma} \bbF_n$ and let $\hat{\pi}_{i}$, $i \in \bbI$, be the restriction of that family on $\A \times_{\sigma} \bbF_n^+$. Since $\pi_i (\A \times_{\sigma} \bbF_n^+)$ is an operator algebra acting on a finite dimensional space, it is inverse closed and therefore it contains the adjoints of the unitary operators $\pi_i( U_j)$, $i=1, 2, \dots, n$. Hence $\pi_i (\A \times_{\sigma} \bbF_n^+)$ equals $\pi_i (\A \times_{\sigma} \bbF_n)$  and so $\hat{\pi}_{i}$, $i \in \bbI$, is a separating family of finite dimensional irreducible representations for $\A \times_{\sigma} \bbF_n^+$. As with Theorem~\ref{topolgraphmult}, this suffices to prove the result.
\end{proof}

Proposition \ref{RFD} applies in particular to the case where $\A = \bbC$ and $\sigma$ consisting of $n$-copies of the identity map. Indeed

\begin{corollary} \label{Choi}
Let $\ca(\bbF_n)$ be the full $\ca$-algebra of the free group on $n$-generators and let $A (\bbF_n^+)$ be the non-selfadjoint subalgebra of  $\ca(\bbF_n)$ generated by the generators. Then any approximately local multiplier on $A (\bbF_n^+)$ is a multiplier.
\end{corollary}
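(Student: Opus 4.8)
The plan is to recognize $A(\bbF_n^+)$ as a semicrossed product of exactly the form governed by Proposition \ref{RFD} and then to invoke the residual finite dimensionality of the full free group $\ca$-algebra. First I would take $\A = \bbC$ and let $\sigma = (\id, \id, \dots, \id)$ consist of $n$ copies of the identity automorphism of $\bbC$; this is an automorphic multivariable $\ca$-dynamical system. Since each $\sigma_i$ is the identity, the covariance relations $U_i a U_i^* = \sigma_i(a)$ are vacuous on the one-dimensional algebra $\bbC$, so the crossed product $\bbC \times_{\sigma} \bbF_n$ is nothing but the universal $\ca$-algebra generated by $n$ free unitaries $U_1, \dots, U_n$ with no relations. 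This is precisely the full group $\ca$-algebra $\ca(\bbF_n)$, and the non-selfadjoint subalgebra $\bbC \times_{\sigma} \bbF_n^+$ generated by $U_1, \dots, U_n$ (without adjoints) is exactly $A(\bbF_n^+)$.

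Second, I would verify the hypothesis of Proposition \ref{RFD}, namely that the crossed product $\bbC \times_{\sigma} \bbF_n = \ca(\bbF_n)$ is residually finite dimensional. This is \emph{Choi's theorem}, which provides a separating family of finite dimensional representations of $\ca(\bbF_n)$; it is the single nontrivial input to the argument. With this in hand, Proposition \ref{RFD} applies verbatim: one restricts such a separating family $\{\pi_i\}_{i\in\bbI}$ of finite dimensional irreducible representations to $A(\bbF_n^+)$, uses the fact that each finite dimensional image operator algebra is inverse closed to recover the adjoints $\pi_i(U_j)^*$, and thereby obtains a separating family of finite dimensional representations on which Theorem \ref{basic1} can be brought to bear.

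The only genuine content beyond the bookkeeping identification of the algebras is the RFD property of $\ca(\bbF_n)$, which is why the statement is attributed to Choi; I do not anticipate any serious obstacle, as the reduction is entirely formal once one observes that the trivial dynamics collapses the covariance relations and makes the crossed product universal on $n$ unitaries. If one wishes to upgrade the left multiplier conclusion delivered by Proposition \ref{RFD} to the two-sided multiplier statement, one runs the symmetric argument on the opposite algebra, which is immediate since the very same separating family of finite dimensional representations serves both the left and the right versions.
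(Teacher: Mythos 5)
Your proposal matches the paper's proof exactly: the paper observes that Proposition \ref{RFD} applies with $\A = \bbC$ and $\sigma$ consisting of $n$ copies of the identity map, so that $\bbC \times_{\sigma} \bbF_n = \ca(\bbF_n)$ is residually finite dimensional by Choi's theorem, which is the entire argument. Your closing remark about the opposite-algebra symmetry for the two-sided multiplier reading is a sensible gloss the paper leaves implicit, but it does not change the substance.
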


\begin{proof}
The result follows from a well-known result of Choi \cite[Theorem 1]{Choi} and Proposition \ref{RFD}.
\end{proof}

An interesting result of Samei \cite{Samei} asserts that if all approximately local one-sided multipliers from a Banach algebra $\A$ into its one-sided modules are one-sided multipliers, then all approximately local derivations on $\A$-bimodules are derivations. This approach was used by Hadwin and Li \cite{HadL} to prove that all local derivations on CSL algebras are actually derivations by proving the corresponding result for approximately one-sided multipliers.

It turns out that this approach is not applicable here. Neither Theorem~\ref{topolgraphmult} nor Corollary \ref{multivmult} do extend to approximate left multipliers on a right $\T_{\G}^+$-module, instead of $\T_{\G}^+$ itself. The following provides a broad class of counterexamples.

\begin{example}
Let $(\Omega , \sigma)$ be a dynamical system on a locally compact Hausdorff space and assume that $\sigma$ has a fixed point $x  \in \Omega$. Then there exists a finite dimensional $C_0(\Omega) \times_{\sigma} \bbZ^{+}$-module $\fA$ and a local left multiplier $S: C_0(\Omega) \times_{\sigma} \bbZ^{+} \rightarrow \fA$ which is not a left multiplier.

Indeed, let $\fA$ and $S_{\fA}$ be as in Example \ref{nonmult}. Since $x  \in \Omega$ is a fixed point for $\sigma$ the mapping
\[
\pi : C_0(\Omega) \times_{\sigma} \bbZ^{+} \longrightarrow \fA;\, \sum_{n=0}^{\infty}V^nf_n\longmapsto
\begin{pmatrix}
f_0(x) & f_1(x) \\
0 & f_0 (x)
\end{pmatrix}
\]
is a representation that makes $\fA$ a right $C_0(\Omega) \times_{\sigma} \bbZ^{+}$-module with the natural action coming from $\pi$. If
\[
S: C_0(\Omega) \times_{\sigma} \bbZ^{+} \longrightarrow \fA;\, A \longmapsto S_{\fA}(\pi(A)),
\]
then $S$ is a local left multiplier which is not a left multiplier (otherwise $S_{\fA}$ would be a multiplier as well).
\end{example}

\section{Local Derivations}

In this section we investigate whether an analogue of Theorem \ref{topolgraphmult} is valid for approximately local derivation instead of multipliers. This is a harder problem and the following illustrates where the difficulty lies.

\begin{proposition} Let $\A$ be an operator algebra and assume that
\begin{itemize}
\item[(i)] the finite dimensional nest representations separate points in $\A$
    \item[(ii)] any derivation $\delta:\A \rightarrow \A$ leaves invariant any closed ideal.
    \end{itemize}
Then any approximately local derivation $\delta: \A \rightarrow \A$ is a derivation.
\end{proposition}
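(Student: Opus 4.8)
The plan is to transport the problem through the separating family of finite dimensional nest representations supplied by hypothesis (i), exactly as in the proof of Theorem \ref{topolgraphmult}, and then to invoke Theorem \ref{basic2} on each quotient. So let $\delta\colon\A\to\A$ be an approximately local derivation and let $\{\rho_i\colon\A\to B(\H_i)\}_{i\in\bbI}$ be a separating family of finite dimensional nest representations, so that each $\rho_i(\A)$ is a finite dimensional nest algebra and $\bigcap_i\ker\rho_i=\{0\}$. Each $\ker\rho_i$ is a closed two-sided ideal, and the quotient $\A/\ker\rho_i$ is isomorphic as a Banach algebra to $\rho_i(\A)$. The crucial structural fact I would record at the outset is that such a nest algebra is spanned by its idempotents: its diagonal blocks are full matrix algebras, while each off-diagonal matrix unit $E_{jk}$ is the difference $(E_{jj}+E_{jk})-E_{jj}$ of two idempotents. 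Hence Theorem \ref{basic2} is applicable to $\A/\ker\rho_i$ --- this is the derivation analogue of the application of Theorem \ref{basic1} in Theorem \ref{topolgraphmult}.

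The first step is to show that $\delta$ leaves each $\ker\rho_i$ invariant, and this is where hypothesis (ii) is indispensable. Fixing $A\in\ker\rho_i$, I would write $\delta(A)=\lim_n\delta_{A,n}(A)$ with each $\delta_{A,n}\colon\A\to\A$ a derivation; by (ii) every $\delta_{A,n}$ leaves the closed ideal $\ker\rho_i$ invariant, so $\delta_{A,n}(A)\in\ker\rho_i$, and since $\ker\rho_i$ is closed the limit $\delta(A)$ lies in $\ker\rho_i$ as well. The same invariance lets each $\delta_{A,n}$ \emph{descend} to a derivation $\bar\delta_{A,n}$ on $\A/\ker\rho_i$.

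The second step is to descend $\delta$ itself. With $q_i\colon\A\to\A/\ker\rho_i$ the continuous quotient map, the invariance just proved makes
\[
\bar\delta_i\colon\A/\ker\rho_i\longrightarrow\A/\ker\rho_i,\qquad \bar\delta_i(q_i(A))=q_i(\delta(A)),
\]
a well-defined bounded map, and the identity $\bar\delta_i(q_i(A))=\lim_n\bar\delta_{A,n}(q_i(A))$ (valid by continuity of $q_i$) exhibits it as an approximately local derivation on $\A/\ker\rho_i$. Since $\A/\ker\rho_i$ is generated by its idempotents, Theorem \ref{basic2} forces $\bar\delta_i$ to be a genuine derivation. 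Unwinding the derivation identity for $\bar\delta_i$ gives $\rho_i\bigl(\delta(AB)-\delta(A)B-A\delta(B)\bigr)=0$ for every $i\in\bbI$, and since $\bigcap_i\ker\rho_i=\{0\}$ I conclude $\delta(AB)=\delta(A)B+A\delta(B)$, as desired.

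The main obstacle is precisely the invariance/descent of the first two steps. In the multiplier setting of Theorem \ref{topolgraphmult} the induced quotient maps existed for free, because approximately local left multipliers automatically preserve closed left ideals; no analogous automatic property is available for derivations, and indeed $\delta$ need not pass to the quotient, since $\A$ itself is typically not generated by idempotents and $\ker\rho_i$ need not be $\delta$-invariant. Hypothesis (ii) is exactly the ingredient that restores the descent, relocating the whole problem to the nest-algebra quotients where Theorem \ref{basic2} applies.
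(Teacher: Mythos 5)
Your proof is correct and takes essentially the same route as the paper's: hypothesis (ii) combined with the local approximation of $\delta$ by derivations gives invariance of each $\ker\rho_i$, the induced maps on the quotients (finite dimensional nest algebras, hence generated by their idempotents) are approximately local derivations to which Theorem \ref{basic2} applies, and the separating property of the family finishes the argument. The only difference is one of exposition: you spell out the descent to the quotient and the idempotent-spanning of nest algebras explicitly, where the paper leaves these as brief remarks.
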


\begin{proof}
Let $\delta$ be an approximately local derivation on $\A$ and let $\{ \phi_i\}_{i \in \bbI} $ be a family of finite dimensional nest representations that separate points in $\A$. Since locally $\delta$ can be approximated by derivations, it leaves all closed ideals invariant. Hence we obtain well-defined approximately local derivations
\[
\delta_{i}: \A  /  \ker\phi_{i} \longrightarrow  \phi_{i}(\A); A + \ker  \phi_{i} \mapsto  \phi_{i} (\delta(A)), \quad i \in \bbI,
\]
with the right action on $\phi_{i} (\A)$ coming from $\phi_i$. Since $\A  /  \ker\phi_{i} $ is generated by its idempotents (it is a finite dimensional nest algebra), Theorem \ref{basic2} implies that the $\delta_{i}$ are actually derivations. Hence
\[
\phi_{i}\big( \delta(AB)-\delta(A)B - A\delta(B)\big) =0,
\]
for all $ i \in \bbI$. Since $\cap_{i\in \bbI}\ker\phi_{i}= \{0\}$, the conclusion follows.
\end{proof}

If we knew that any derivation on the tensor algebra of a topological graph leaves closed ideals invariant, e.g., it is inner, then the result above combined with Theorem \ref{nestrepn} would imply that local derivations are derivations. With the exception of \cite{Duncan}, it seems that nothing is known in that direction. We therefore adopt a different approach: using the concept of an acyclic graph, we build a more flexible representation theory than that of Theorem \ref{nestrepn}. The drawback is that it is not applicable in all situations.

We begin by recalling some further results regarding the structure of the tensor algebra of a topological graph $\G=  (\G^{0}, \G^{1}, r , s)$. Let
\[
\G^n \equiv \{ e_ne_{n-1}\dots e_1 \mid e_i \in \G^1, s(e_{i+1})=r(e_i), \, 1\leq i \leq n \}
\]
be the space of paths of length $n$ equipped with the topology it inherits a a subset of $\G^1 \times  \G^1 \times \dots \times \G^1 $. Equip $\G^n $ with the obvious domain and range maps $s: \G^n \rightarrow \G^0 $ and $r: \G^n \rightarrow \G^0 $, i.e., $s(e_ne_{n-1}\dots e_1) = s(e_1)$ and  $r(e_ne_{n-1}\dots e_1) = r(e_n)$. Then \cite[Lemma 1.25]{Katsura} shows that the quadruple $(\G^{n}, \G^{0}, r , s)$ becomes a topological graph which we simply denote as $\G^n$. Furthermore, \cite[Proposition 1.27]{Katsura} shows that $X_{\G^n} \simeq X_{\G}\otimes  X_{\G} \otimes \dots \otimes  X_{\G}$, via the identification
\begin{equation} \label{identification}
\big( F_n\otimes F_{n-1} \otimes \dots \otimes F_1\big) (e_ne_{n-1}\dots e_1 ) = F_{n}(e_n)F_{n-1}(e_{n-1})\dots F_1(e_1),
\end{equation}
where $F_i \in X_{\G}$ , $i=1,2,  \dots , n$, $e_ne_{n-1}\dots e_1 \in \G^n$ and $\otimes$ denotes the internal tensor product of $\ca$-correspondences.

Now fix a $v \in \G^0$ so that the graph $\G_v$ is acyclic, i.e., no paths of length higher than zero start and end at the same vertex, and recall the representation $\phi_v = \pi_v \times t_v$ of $\T^+_{\G}$, as it appears in the proof of Theorem \ref{nestrepn}. Using $\phi_v$, we construct now a family of finite dimensional representations for $\T_{\G}^+$ whose kernels satisfy a useful property.

For each $n= 0, 1, 2, \dots$, let
\[
\F_{v,n} = \left\{ p \in \G^{\infty}_v \, \Big| \,  s(p)\in  \bigcup_{i=0}^{n} \G_{v, i}^0 \mbox{ and } r(p) \in  \bigcup_{i=0}^{n} \G_{v, i}^0  \right\}
\]
and
\[
\H_{v, n} =\bigvee  \left \{ \xi_p \mid p \in \F_{v,n}  \right\} \subseteq \H_{\G_v}.
\]
It is easy to verify that $\H_{v, n}$ is co-invariant for $\phi_v(\T^+_{\G})$ and so the compression of $\phi_v$ on $\H_{v, n}$ defines a representation of $\T^+_{\G}$ denoted as $\phi_{v,n}$. Since $\bigvee_n \H_{v,n}=\H_{\G_v}$, the family $\{ \phi_{v, n}\}_{n\in \bbN}$ separates points in $\T_{\G}^+/\ker\phi_v$.

In order to ensure that the representations appearing in the next lemma are finite dimensional we need to assume that the topological graph $\G=  (\G^{0}, \G^{1}, r , s)$ is \textit{source finite}, i.e., $s^{-1}(\{v\})$ is a finite set for any $v \in \G^0$. This assumption is automatically satisfied when $\G^1$ is compact and in all applications considered in this paper.

\begin{lemma} \label{idemotrep}
 If $\phi_{v,n}$ is as above, with $\G_v$ acyclic, then $\phi_{v,n}(\T^+_{\G})$ is a finite dimensional operator algebra which is generated by its idempotents.
\end{lemma}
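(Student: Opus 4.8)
The plan is to prove two separate assertions about $\A:=\phi_{v,n}(\T^+_{\G})\subseteq B(\H_{v,n})$: first that $\H_{v,n}$ is finite dimensional, and second that $\A$ is the linear span of its idempotents (which is what ``generated by its idempotents'' means, as used in the proof of Theorem~\ref{basic2}). I will work throughout with the concrete description of $\phi_v$ from the proof of Theorem~\ref{nestrepn}: $\pi_v(f)$ acts diagonally by $\xi_p\mapsto f(r(p))\xi_p$, while $t_v(F)$ prepends edges at the range end, so that the creation operator along a path $p$ satisfies $L_p\xi_q=\xi_{pq}$ when $s(p)=r(q)$ and $L_p\xi_q=0$ otherwise.

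For finite dimensionality, write $V_n:=\bigcup_{i=0}^{n}\G^0_{v,i}$. Source finiteness makes each $\G^1_{v,i}=s^{-1}(\G^0_{v,i})$ finite, so inductively each $\G^0_{v,i}$ is finite and $V_n$ is a finite set of vertices. Because $\H_{v,n}$ is co-invariant for $\phi_v(\T^+_{\G})$, the set $\F_{v,n}$ is closed under deleting the range-end edge of a path; deleting these one at a time shows that every vertex occurring along some $p\in\F_{v,n}$ has the form $r(p')$ for a source-initial segment $p'\in\F_{v,n}$, and hence lies in $V_n$. Thus every $p\in\F_{v,n}$ is a path in the finite graph induced on $V_n$. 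Acyclicity forbids a path from repeating a vertex, so each such path is simple and has length at most $|V_n|-1$; as a finite graph contains only finitely many simple paths, $\F_{v,n}$ is finite and $\dim\H_{v,n}<\infty$. In particular $\A$ is a finite dimensional operator algebra.

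For the idempotent generation I will use acyclicity a second time, now to force nilpotency. Using the Fourier expansion (\ref{Cesaro}) together with finite dimensionality, $\A$ is the linear span of the compressed creation operators $L_p$ indexed by the paths $p\in\F_{v,n}$, where a length-zero path $w$ yields the vertex projection $L_w=P_w$ (the compression of $\pi_v(\chi_w)$). Each $P_w$ is already idempotent, so it remains to treat $L_p$ with $|p|\geq1$. Here acyclicity gives $s(p)\neq r(p)$, and this yields the three relations $P_{r(p)}L_p=L_p$, $L_pP_{r(p)}=0$, and $L_p^2=0$ (the last because $pp$ is never composable). A direct computation then shows that $P_{r(p)}+L_p$ is an idempotent of $\A$, so that
\[
L_p=\bigl(P_{r(p)}+L_p\bigr)-P_{r(p)}
\]
exhibits $L_p$ as a difference of two idempotents of $\A$. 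Since the spanning set $\{L_p\mid p\in\F_{v,n}\}$ then lies in the linear span of the idempotents of $\A$, the algebra is generated by its idempotents and Theorem~\ref{basic2} applies.

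I expect the delicate step to be the finite-dimensionality, and specifically the passage from ``both endpoints of $p$ lie in $V_n$'' to ``all vertices of $p$ lie in $V_n$'': this is exactly where the co-invariance of $\H_{v,n}$ (equivalently, the closure of $\F_{v,n}$ under source-initial segments) must be invoked, since the endpoint condition by itself does not confine a path to $V_n$. Once the paths are trapped inside the finite acyclic graph on $V_n$, both the finiteness and the triangular structure underlying the idempotent computation follow cleanly, with acyclicity entering through the absence of repeated vertices and, in the second half, through the identity $L_p^2=0$.
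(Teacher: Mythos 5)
Your proof is correct under the lemma's standing hypotheses, and its two halves compare differently with the paper's argument. The idempotent half is essentially the paper's: the paper uses the same consequences of acyclicity (in its notation $\hat{L}_p=\hat{L}_{r(p)}\hat{L}_p\hat{L}_{s(p)}$ and $\hat{L}_{r(p)}\hat{L}_{s(p)}=0$, whence $\hat{L}_p\hat{L}_{r(p)}=\hat{L}_p^2=0$) and the decomposition $2\hat{L}_{p}=\bigl(\hat{L}_{r(p)}+\hat{L}_{p}\bigr)-\bigl(\hat{L}_{r(p)}-\hat{L}_{p}\bigr)$, which differs from your $L_p=\bigl(P_{r(p)}+L_p\bigr)-P_{r(p)}$ only in which pair of idempotents is exhibited. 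The finite-dimensionality half, however, is genuinely different, and your route is the sounder one. The paper deduces finiteness of $\F_{v,n}$ from the claim that acyclicity forces $\G_{v,i}^{0}\cap\G_{v,j}^{0}=\emptyset$ for $i\neq j$, so that paths in $\F_{v,n}$ have length at most $n$. That claim is false: the acyclic graph with vertices $v,a,b$ and edges $v\rightarrow a$, $v\rightarrow b$, $a\rightarrow b$ has $b\in\G_{v,1}^{0}\cap\G_{v,2}^{0}$ (here the lemma's conclusion still holds and your argument applies, but the paper's does not). Your replacement --- co-invariance of $\H_{v,n}$ is exactly closure of $\F_{v,n}$ under deletion of the range-end edge, which traps all vertices of every path of $\F_{v,n}$ inside the finite set $V_n=\bigcup_{i=0}^{n}\G^0_{v,i}$, where acyclicity forbids repeated vertices --- is a correct derivation and is what actually yields the statement.

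You should be aware, though, of where this leaves the lemma, since you take co-invariance as given. Co-invariance of $\H_{v,n}$ is not automatic for acyclic $\G_v$: as you essentially observe, it is \emph{equivalent} to the initial-segment closure you derive from it, and it can fail. Take vertices $v,u,w_1,w_2,\dots$ and edges $v\rightarrow u$, $v\rightarrow w_1$, $w_i\rightarrow w_{i+1}$, $w_i\rightarrow u$; this graph is source finite and acyclic, yet for every $k$ the path $v\rightarrow w_1\rightarrow\cdots\rightarrow w_k\rightarrow u$ has both endpoints in $V_1=\{v,w_1,u\}$, so $\F_{v,1}$ is infinite and $\H_{v,1}$ is neither finite dimensional nor co-invariant. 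Thus both your proof and the paper's make sense only if ``as above'' is read to include co-invariance of $\H_{v,n}$ (equivalently, that the compressions $\phi_{v,n}$ are in fact representations); without that reading the conclusion itself fails for some acyclic graphs. What makes co-invariance and finiteness hold simultaneously is precisely the disjointness the paper tries to extract from acyclicity: no vertex of $\G_v$ reachable from $v$ by paths of two different lengths. This gradedness is strictly stronger than acyclicity, but it does hold in all of the paper's applications of Theorem \ref{mainder} (aperiodic points for semicrossed products, points outside $\Omega_0$ for multivariable systems, nonperiodic points in the Exel--Vershik setting), so the downstream results are unaffected; still, your proof is best viewed as the correct argument for the lemma under that strengthened hypothesis, with the co-invariance then verifiable rather than assumed.
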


\begin{proof}
Since $\G_v$ is acyclic, $\G_{v,i}^0 \cap \G_{v,j}^{\,0} = \emptyset$, for $i \neq j$. This implies that the cardinality of $\F_{v, n}$
is finite. Hence $\H_{v, n}$ is finite dimensional and the algebra $\phi_{v,n}(\T^+_{\G})$ is generated as a linear space by the (finitely many) compressions $\hat{L}_{p}\equiv P_{\H_{v, n}}L_{p}|_{\H_{v, n}}$, $p \in \F_{v, n}$.

If $p \in \F_{v, n}
\cap \G_v^0$, then $\hat{L}_{p}$ is an idempotent (actually an orthogonal projection). Otherwise, $\hat{L}_{p} = \hat{L}_{r(p)} \hat{L}_{p} \hat{L}_{s(p)}$ and since $\G_v$ is acyclic we have $$\hat{L}_{r(p)} \hat{L}_{s(p)} = 0. $$ Hence the identity
\[
2 \hat{L}_{p} =  \big(\hat{L}_{r(p)} +\hat{L}_{p} \big) -  \big(\hat{L}_{r(p)} -\hat{L}_{p} \big)
\]
shows that $\hat{L}_{p}$ belongs to the span of two idempotents and the conclusion \break follows.
\end{proof}

 If $\fG= (\fG^0, \fG^1, r, s)$ is a topological graph and  $S \subseteq \fG^1 $, then $N(S)$ denotes the collection of continuous functions $F \in X_{\fG}$ with $F_{|S}=0$, i.e., vanishing at $S$.

 \begin{lemma} \label{topology}
 Let $\fG= (\fG^0, \fG^1, r, s)$ be a topological graph.
 \begin{itemize}
 \item[(i)] If $S_1, S_2 \subseteq \fG^1$ closed, then
 \[
 N(S_1 \cap S_2) = \overline{N(S_1) + N(S_2)}.
 \]
 \item[(ii)] If $S_1 \subseteq \fG^0$, $S_2 \subseteq \fG^1$ closed, then
 \[
 N(r^{-1}(S_1) \cup S_2)=\overline{\phantom{,}\big[ \{(f\circ r) F\mid f_{|S_1} = 0, F_{|S_2} = 0\}\big]\phantom{,}}
 \]
 \item[(iii)] If $S_1 \subseteq \fG^0$, $S_2 \subseteq \fG^1$ closed, then
 \[
 N(s^{-1}(S_1) \cup S_2)=\overline{\phantom{,}\big[ \{(f\circ s) F\mid f_{|S_1} = 0, F_{|S_2} = 0\}\big]\phantom{,}}
 \]
 \end{itemize}
 \end{lemma}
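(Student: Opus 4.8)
The plan is to treat all three parts through a single mechanism: realize $X_{\fG}$ concretely as a space of functions, reduce every approximation to compactly supported functions, and then manufacture the required factors by Urysohn's Lemma. First I would record three preliminary facts. Since $|F(e)|^{2}\le \langle F\mid F\rangle(s(e))\le \|F\|^{2}$ for $F\in C_c(\fG^1)$, evaluation at a point of $\fG^1$ is $\|\cdot\|$-contractive, so $X_{\fG}$ embeds in $C_0(\fG^1)$ and each $N(S)$ is a genuine closed subspace consisting of honest functions vanishing on $S$. Next, pointwise multiplication makes $X_{\fG}$ a nondegenerate contractive module over $C_0(\fG^1)$ (and, via $f\mapsto f\circ r$ and $f\mapsto f\circ s$, over $C_0(\fG^0)$ on the left and right), so a contractive approximate unit $\{\chi_k\}\subseteq C_c(\fG^1)$ satisfies $\chi_k F\to F$ for every $F\in X_{\fG}$; since $\chi_k F$ preserves vanishing on any closed set, $C_c(\fG^1)\cap N(S)$ is dense in $N(S)$. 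Finally, because $s$ is a local homeomorphism, any compact $K\subseteq \fG^1$ is covered by finitely many, say $m$, open sets on which $s$ is injective, so $\#(s^{-1}(v)\cap K)\le m$ for all $v$; consequently $\|G\|\le \sqrt{m}\,\|G\|_\infty$ for every $G$ supported in $K$. This last estimate is the bridge from sup-norm smallness to $X_{\fG}$-norm smallness.

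For (i), the inclusion $\overline{N(S_1)+N(S_2)}\subseteq N(S_1\cap S_2)$ is immediate since each $N(S_i)\subseteq N(S_1\cap S_2)$ and the latter is closed. For the reverse, by the density noted above it suffices to fix $F\in C_c(\fG^1)\cap N(S_1\cap S_2)$ with support $K$, together with $\varepsilon>0$. The compact set $\{|F|\ge\varepsilon\}$ is disjoint from the closed set $S_1\cap S_2$ (as $F$ vanishes there), so Urysohn's Lemma yields $h\in C_c(\fG^1)$, $0\le h\le 1$, equal to $1$ on $\{|F|\ge\varepsilon\}$ and vanishing on a neighborhood of $S_1\cap S_2$; thus $h\in I(S_1\cap S_2)\equiv\{g\in C_0(\fG^1): g|_{S_1\cap S_2}=0\}$. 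The error $(1-h)F$ is supported in $K$ and is pointwise $<\varepsilon$, so the fiber bound gives $\|F-hF\|\le\sqrt{m}\,\varepsilon$. Now I invoke the standard commutative fact that $\overline{I(S_1)+I(S_2)}=I(S_1\cap S_2)$ in $C_0(\fG^1)$ to write $h$ as a norm-limit of $h_1+h_2$ with $h_i|_{S_i}=0$; since $\|(h-h_1-h_2)F\|\le\|h-h_1-h_2\|_\infty\|F\|$ and $h_iF\in N(S_i)$, the element $F$ lies in $\overline{N(S_1)+N(S_2)}$, as required.

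For (ii) and (iii), observe first that $N\big(r^{-1}(S_1)\cup S_2\big)=N\big(r^{-1}(S_1)\big)\cap N(S_2)$, and likewise with $s$; the inclusion $\supseteq$ is clear because $(f\circ r)F$ vanishes on $r^{-1}(S_1)$ where $f\circ r=0$ and on $S_2$ where $F=0$. For $\subseteq$ a single factor suffices, and again it is enough to treat $F\in C_c(\fG^1)$ vanishing on $r^{-1}(S_1)\cup S_2$. Given such $F$ and $\varepsilon>0$, the compact set $D\equiv r(\{|F|\ge\varepsilon\})$ is disjoint from $S_1$: if $v\in S_1\cap D$ then $v=r(e)$ for some $e$ with $|F(e)|\ge\varepsilon$, yet $e\in r^{-1}(S_1)$ forces $F(e)=0$. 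Urysohn's Lemma then gives $f\in C_c(\fG^0)$, $0\le f\le1$, equal to $1$ on $D$ and vanishing near $S_1$, so $f|_{S_1}=0$. Since $(f\circ r)(e)=1$ whenever $|F(e)|\ge\varepsilon$, the error $(1-f\circ r)F$ is supported in $\supp F$ and pointwise $<\varepsilon$, whence $\|F-(f\circ r)F\|\le\sqrt{m}\,\varepsilon$ by the fiber bound. Since $F\in N(S_2)$, the approximant is of the required form $(f\circ r)\cdot F$ with $f|_{S_1}=0$ and $F|_{S_2}=0$, exhibiting $F$ in the closed span on the right. Part (iii) is identical after replacing $r$ by $s$ throughout and taking $D=s(\{|F|\ge\varepsilon\})$, using the right action $f\mapsto f\circ s$.

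The main obstacle, and the only genuinely topological point, is the passage from pointwise estimates to the Hilbert-module norm $\|F\|=\sup_v\langle F\mid F\rangle(v)^{1/2}$, whose fibers $s^{-1}(v)$ may a priori be infinite; a cutoff that is uniformly small in modulus need not be small in $\|\cdot\|$. Everything above hinges on controlling this, which is exactly why I reduce to compactly supported $F$ via the approximate unit and then exploit that $s$ is a \emph{local homeomorphism} to bound the number of edges in each fiber meeting a fixed compact set. Once that uniform bound $m$ is in hand, the three identities follow by the Urysohn constructions above, with (i) additionally using the elementary ideal identity in $C_0(\fG^1)$ and (ii)--(iii) needing only a single pulled-back multiplier.
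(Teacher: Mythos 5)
Your proposal is correct and follows essentially the same route as the paper's proof: reduce to compactly supported elements of $N(S)$ via the $C_c(\fG^1)$-module action, establish that on functions supported in a fixed compact set the Hilbert-module norm and the sup norm are equivalent (exploiting that $s$ is a local homeomorphism), and then manufacture the approximants with Urysohn's Lemma. The only deviations are technical rather than structural: your fiber-counting bound $\|G\|\le\sqrt{m}\,\|G\|_\infty$ replaces the paper's partition-of-unity estimate $\|F\|\le n\,\|F\|_\infty$; in (i) you invoke the standard closed-ideal identity $\overline{I(S_1)+I(S_2)}=I(S_1\cap S_2)$ in the commutative $\ca$-algebra $C_0(\fG^1)$ where the paper uses a partition of unity subordinate to a cover of $K_\epsilon$ by open sets each missing one of the $S_i$; and in (ii)--(iii) your $\epsilon$-level-set cutoff replaces the paper's increasing approximate unit for $C_0(\fG^0\setminus S_1)$ together with Dini's Theorem.
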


 \begin{proof}
 Let $K\subseteq \fG^1$ compact. We claim that for any sequence $\{F_n\}_n$ with $\supp F_n \subseteq K$, $n\in \bbN,$ convergence in $X_{\fG}$ is equivalent to convergence in the usual supremum norm $\|\phantom{x}\|_{\infty}$ of $C_0 (\fG^1)$.

 Indeed, this will follow if we show that there exists $n \in \bbN$ so that \break $\| F\| \leq n \| F \|_{\infty}$, for any $F$ with $\supp F \subseteq K$. Let $U_1, U_2, \dots , U_{n}$ be an open cover of $K$ so that $s_{|U_i}$ is injective for all $i=1, 2, \dots , n$. Notice that for any function $G$, with $\supp G\subseteq U_i$ for some $i$, we have $\|G\| = \|G\|_{\infty}$. Hence, if $\{ H_i \}_{i=1}^n$ is a partition of unity for $K$ subordinated by the cover $U_1, U_2, \dots , U_{n}$, then,
 \[
 \|F\| \leq\sum_{i=1}^n \|H_i F\|= \sum_{i=1}^n \|H_i F\|_{\infty} \leq n\|F\|_{\infty}
 \]
 and the conclusion follows.

 (i) Let $F \in N(S_1 \cap S_2)$. By \cite[Lemma 1.6]{Katsura} and its proof, $F$ can be approximated by functions of the form $HF$, where $H \in C_c (\fG^1)$. Clearly such functions belong to $N(S_1 \cap S_2)$ and so without loss of generality we may assume that $\supp F = K$ compact. The proof now follows familiar lines as we only need to approximate $F$ by elements of $N(S_1) + N(S_2)$, in the usual supremum norm, while staying inside $K$.

 Let $\epsilon >0$ and let $K_{\epsilon} = \{ e \in K\mid |F(e)\geq \epsilon\}$. Since $K_{\epsilon}$ is disjoint from $S_1\cap S_2$, we can cover $K_{\epsilon}$ with finitely many $U_1, U_2, \dots U_m$ so that each of the $U_i$ is disjoint from one of the $S_1, S_2$. Let $H_i$, $i=1, 2, \dots , m $ a partition of unity for $K_{\epsilon}$ which is subordinate to $U_1, U_2, \dots U_m$. Then $\sum_{i=1}^{m} \, H_iF \in N(S_1) + N(S_2)$ and is $\epsilon$-close to $F$.

 (ii) Let $F \in N(r^{-1}(S_1) \cup S_2)$. Once again, by \cite[Lemma 1.6]{Katsura} there is no loss of generality assuming that $F$ has compact support. If $\{ f_i \}_{i \in \bbI}$, is an increasing approximate unit for $C_0 (\fG^0 \backslash S_1) \subseteq C_0(\fG^0)$, then Dini's Theorem implies that the family $\{(f_i\circ r)F\}_{i \in \bbI}$ approximates $F$ (in both norms) and proves the Lemma.

 (iii) The proof is similar to that of (ii).
 \end{proof}

Lemma \ref{topology} is now being used in the following with $\G^i$, $i \in \bbN$, in the place of $\fG$.

 \begin{lemma}  \label{kernelsquare}
 If $\phi_{v, n}$ is as above, with $\G_v$ acyclic, then $$\ker \phi_{v, n} = \overline{\, [ (\ker \phi_{v, n})^{2}] }.$$
 \end{lemma}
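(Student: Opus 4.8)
The inclusion $\overline{[(\ker\phi_{v,n})^2]}\subseteq\ker\phi_{v,n}$ is automatic, since $\ker\phi_{v,n}$ is a closed ideal, so my plan is to establish the reverse inclusion. The first step would be to observe that $\phi_{v,n}$ intertwines the gauge action: the unitaries $U_\lambda\xi_p=\lambda^{|p|}\xi_p$ leave the spanning set of $\H_{v,n}$ invariant, so $\phi_{v,n}\circ\alpha_\lambda=(\operatorname{Ad}U_\lambda)\circ\phi_{v,n}$ and $\ker\phi_{v,n}$ is gauge-invariant, hence graded. Consequently, if $T\in\ker\phi_{v,n}$ then every homogeneous component appearing in its Fourier expansion (\ref{Cesaro}) again lies in $\ker\phi_{v,n}$; since $T$ is recovered from these components by Cesaro summation, it would suffice to show that each \emph{homogeneous} element of $\ker\phi_{v,n}$ belongs to $\overline{[(\ker\phi_{v,n})^2]}$.

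Next I would fix notation, writing $V_n=\bigcup_{i=0}^{n}\G_{v,i}^0$ and $\F_{v,n}^{(m)}=\{p\in\F_{v,n}:|p|=m\}$; because $\G_v$ is acyclic and $\G$ is source finite, both sets are finite, hence closed. A diagonal element $\pi_\infty(f)$ lies in $\ker\phi_{v,n}$ exactly when $f$ vanishes on $V_n$, and since $C_0(\G^0\setminus V_n)$ is a $\ca$-algebra it equals the closed span of its own products, which handles degree $0$. For $m\geq1$, evaluating $\phi_{v,n}(t_\infty^m(F))$ on the vectors $\xi_p$, $p\in\F_{v,n}$, shows that $t_\infty^m(F)\in\ker\phi_{v,n}$ if and only if $F\in N(\F_{v,n}^{(m)})$ (viewing $F\in X_{\G^m}$). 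The decisive geometric point, which I would prove from the relation $\G_{v,i}^1=s^{-1}(\G_{v,i}^0)$, is that a length-$m$ path of $\G$ with both endpoints in $V_n$ lies entirely in $\G_v$; this yields, inside $\G^m$, the identity $s^{-1}(V_n)\cap r^{-1}(V_n)=\F_{v,n}^{(m)}$.

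The final step realizes $t_\infty^m\big(N(\F_{v,n}^{(m)})\big)$ inside $\overline{[(\ker\phi_{v,n})^2]}$ for $m\geq1$. For $f$ vanishing on $V_n$ and $F$ vanishing on $\F_{v,n}^{(m)}$, both $\pi_\infty(f)$ and $t_\infty^m(F)$ lie in $\ker\phi_{v,n}$, while the products $\pi_\infty(f)\,t_\infty^m(F)=t_\infty^m\big((f\circ r)F\big)$ and $t_\infty^m(F)\,\pi_\infty(f)=t_\infty^m\big(F(f\circ s)\big)$ are homogeneous of degree $m$ and belong to $(\ker\phi_{v,n})^2$. Applying Lemma \ref{topology} to the topological graph $\G^m$, parts (ii) and (iii) identify the closed spans of these two families with $t_\infty^m\big(N(r^{-1}(V_n)\cup\F_{v,n}^{(m)})\big)$ and $t_\infty^m\big(N(s^{-1}(V_n)\cup\F_{v,n}^{(m)})\big)$. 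Part (i), combined with the set computation $(r^{-1}(V_n)\cup\F_{v,n}^{(m)})\cap(s^{-1}(V_n)\cup\F_{v,n}^{(m)})=\F_{v,n}^{(m)}$ obtained above, then shows the closure of their sum is exactly $t_\infty^m\big(N(\F_{v,n}^{(m)})\big)$, i.e.\ the entire degree-$m$ part of $\ker\phi_{v,n}$, completing the argument.

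I expect the two genuinely delicate points to be, first, the reduction to homogeneous components, which rests squarely on the gauge-invariance of $\ker\phi_{v,n}$ together with the Cesaro summability of (\ref{Cesaro}); and second, the identity $s^{-1}(V_n)\cap r^{-1}(V_n)=\F_{v,n}^{(m)}$. The latter is the crux: it is precisely this collapse of the intersection in Lemma \ref{topology}(i) onto $\F_{v,n}^{(m)}$ that matches the degree-$m$ part of the square of the kernel to the degree-$m$ part of the kernel itself, and without it the products would vanish on a strictly larger set than required.
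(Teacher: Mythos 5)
Your proposal is correct and takes essentially the same route as the paper's own proof: both reduce to homogeneous Fourier components, identify the degree-$m$ part of $\ker\phi_{v,n}$ with $t_\infty^m\big(N(\{p\in\F_{v,n}\mid |p|=m\})\big)$, and capture it inside $\overline{[(\ker\phi_{v,n})^{2}]}$ using the products $\pi_\infty(f)\,t_\infty^m(F)$ and $t_\infty^m(F)\,\pi_\infty(f)$ together with Lemma \ref{topology} and the intersection identity for $r^{-1}$ and $s^{-1}$ of $\bigcup_{i=0}^{n}\G_{v,i}^0$ inside $\G^m$. The differences are only matters of explicitness: you spell out the gauge-invariance/Cesaro reduction and the degree-zero case, which the paper leaves implicit, and you correctly locate the source of the intersection identity in the construction of $\G_v$.
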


\begin{proof}
From the Fourier series expansion of (\ref{Cesaro}) and the results of Katsura discussed in the beginning of this section, it follows that each elements of $\T^+_{\G}$ admits a Fourier series expansion of the form
\begin{equation} \label{Fourier}
\pi_{\infty}(f) + \sum_{i=1}^{\infty} \,  t_{\infty}^i(F_i),
\end{equation}
with $f \in C_0(\G^0)$ and $F_i \in X_{\G^i }$, $i=1,2, \dots$. It is easy to see now that $\ker \phi_{v, n}$ consists precisely of all elements of $\T^+_{\G}$, whose Fourier series (\ref{Fourier}) satisfies
\[
f \in N\big(\{p \in \F_{v,n} \mid |p|=0\}\big)
\]
and
\[
F_i \in N\big(\{p \in \F_{v,n} \mid |p|=i\}\big), \quad i=1,2,  \dots
\]
Note that in the case where $i>n$, the set $\{p \in \F_{v,n} \mid |p|=i\}$ is empty and so the above simply says that $F_i$ ranges over $X_{\G^i_v }$.

To prove the Lemma, fix an $i$ and notice that any finite sum of the form
\begin{multline} \label{ker2}
\sum_m\Big( \pi_{\infty}(f_m) t^i_{\infty}(F_m) + t^i_{\infty}(F'_m)\pi_{\infty}(f_{m}')\Big) \\
= t^i_{\infty}\Big( \sum_m \, (f_m\circ r) F_m + \sum_m \, (f_m' \circ s) F'_m \Big)
\end{multline}
belongs to $[(\ker \phi_{v, n})^2]$, where $f_m, f_m' \in N(\cup_{i=0}^{n} \G_{v, i}^0)$ and $F_m , F_m' \in N(\{p \in \F_{v,n} \mid |p|=i\})$, for all $m$. However, Lemma \ref{topology} shows that sums of the form \break $\sum_m (f_m\circ r) F_m $ are dense in
\begin{equation} \label{id1}
N\Big( r^{-1}\Big(\bigcup_{j=0}^{n} \G_{v, j}^0\Big) \bigcup \left\{p \in \F_{v,n} \Big| |p|=i \right\} \Big),
\end{equation}
and similarly, sums of the form $\sum_m ( f_m' \circ s) F'_m ( f_m' \circ s)$ are dense in
\begin{equation} \label{id2}
N\Big(  \left\{p \in \F_{v,n} \Big| |p|=i \right\}\Big) \bigcup s^{-1}\Big(\bigcup_{j=0}^{n} \G_{v, j}^0\Big) \Big),
\end{equation}
where $r, s$ denotes the range and source functions on paths of length $i$.
Since $\G_v$ is acyclic
\[
r^{-1}\Big(\bigcup_{j=0}^{n} \G_{v, j}^0\Big) \bigcap s^{-1}\Big(\bigcup_{j=0}^{n} \G_{v, j}^0\Big) = \left\{p \in \F_{v,n} \Big| |p|=i \right\}
\]
and so Lemma \ref{topology}(i) shows that the closed linear space generated by (\ref{id1}) and (\ref{id2}) equals $\{p \in \F_{v,n} \big| |p|=i \}$. This shows that the sums as in (\ref{ker2}) can approximate the $i$-th Fourier coefficient of any element in $\ker \phi_{v, n}.$
\end{proof}

Another possibility that we will be considering here is that the graph $\G_v$ may be \textit{transitive}, i.e., given any two vertices $v', v'' \in \Gxvertex$ there exists a path starting at $v'$ and ending at $v''$. In that case, Davidson and the author have shown \cite[Theorem 4.4]{DavKatProcLMS} that $\L_{\G_v}$ admits a separating family of finite dimensional irreducible representations. Composing these representations with the representation $\phi_v$ of Theorem \ref{nestrepn}, we obtain a separating family $\phi_{v , n}$, $n \in \bbN$,  of finite dimensional irreducible representations of $\T^+_{\G} / \ker \phi_{v}$

\begin{theorem} \label{mainder}
 Let $\G=  (\G^{0}, \G^{1}, r , s)$ be a source finite topological graph and let $\{\G_v \}_{v \in \G^0}$ be the family of discrete graphs associated with $\G$. Assume that the set of all points $ v \in \G^0$ for which $\G_v$ is either acyclic or transitive, is dense in $\G^0$. Then any approximately local derivation on $\T^+_{\G}$ is a derivation.
\end{theorem}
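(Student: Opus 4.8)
The plan is to test the derivation identity against a separating family of finite dimensional representations and to transfer the problem, one representation at a time, to an algebra to which Theorem~\ref{basic2} applies. Write $V_0 \subseteq \G^0$ for the (dense) set of $v$ with $\G_v$ acyclic or transitive and, for $v \in V_0$, let $\phi_{v,n}$ and $B_{v,n}=\phi_{v,n}(\T^+_\G)$ be as constructed above. First I would check that $\{\phi_{v,n}\mid v\in V_0,\,n\in\bbN\}$ already separates the points of $\T^+_\G$: the subfamily $\{\phi_v\mid v\in V_0\}$ is faithful because $\|(\oplus_{v\in V_0}\pi_v)(f)\|=\sup_{v\in V_0}|f(v)|=\|f\|_\infty$ by density and continuity, so the Gauge Invariant Uniqueness Theorem argument of Theorem~\ref{nestrepn} goes through verbatim with $\G^0$ replaced by $V_0$; and for each fixed $v$ the family $\{\phi_{v,n}\}_n$ separates $\T^+_\G/\ker\phi_v$ (shown above in the acyclic case, and coming from \cite[Theorem 4.4]{DavKatProcLMS} in the transitive case). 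Consequently it suffices to prove $\phi_{v,n}\big(\delta(AB)-\delta(A)B-A\delta(B)\big)=0$ for every $v\in V_0$, every $n$, and all $A,B\in\T^+_\G$.

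Fix first a $v$ with $\G_v$ \emph{acyclic}. Here Lemma~\ref{kernelsquare} is the engine, and I would use $\ker\phi_{v,n}=\overline{[(\ker\phi_{v,n})^2]}$ twice. Writing $\phi=\phi_{v,n}$, for $b,c\in\ker\phi$ and any derivation $d$ one has $\phi(d(bc))=\phi(d(b))\phi(c)+\phi(b)\phi(d(c))=0$ because $\phi(b)=\phi(c)=0$; applying this to the local derivations that approximate $\delta$ at the point $bc$ gives $\phi(\delta(bc))=0$, and then linearity and continuity together with the kernel-square identity force $\delta(\ker\phi)\subseteq\ker\phi$. The same computation, applied to a \emph{single} derivation $\eta\colon\T^+_\G\to B_{v,n}$ into the bimodule $B_{v,n}$, shows every such $\eta$ vanishes on $\ker\phi$ and hence factors as $\eta=D\circ\phi$ for a derivation $D$ of $B_{v,n}$. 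These two facts say precisely that the pushforward $\delta_{v,n}\colon B_{v,n}\to B_{v,n}$, $\phi(A)\mapsto\phi(\delta(A))$, is well defined and is again an approximately local derivation: if $\delta(A)=\lim_m d_m(A)$ with $d_m$ derivations, then each $\phi\circ d_m$ factors as $D_m\circ\phi$ and $\delta_{v,n}(\phi(A))=\lim_m D_m(\phi(A))$. Since $B_{v,n}$ is generated by its idempotents (Lemma~\ref{idemotrep}), Theorem~\ref{basic2} makes $\delta_{v,n}$ a derivation, which after applying $\phi$ is the desired identity.

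Now fix a $v$ with $\G_v$ \emph{transitive}. Since $\phi=\phi_{v,n}$ is a finite dimensional irreducible representation, Burnside's theorem gives $B_{v,n}=\phi(\T^+_\G)=M_k(\bbC)$, which is again generated by its idempotents. The acyclic engine, however, breaks down: the kernel-square identity fails here---already for a single vertex carrying a loop, where $\T^+_\G$ is the disc algebra and $\phi$ a point evaluation, $\ker\phi$ is strictly larger than $\overline{[(\ker\phi)^2]}$---so $\delta$ need not preserve $\ker\phi$ and there is no pushforward to $M_k(\bbC)$. Instead I would show directly that $\eta=\phi\circ\delta\colon\T^+_\G\to M_k(\bbC)$, an approximately local derivation into the \emph{simple} bimodule $M_k(\bbC)$, is a genuine derivation. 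A natural route is to lift a matrix unit system through $\phi$ (e.g.\ via a Wedderburn-type splitting $\T^+_\G=C\oplus\ker\phi$ with $C\cong M_k(\bbC)$ a subalgebra), so that the idempotents of $C$ are honest idempotents of $\T^+_\G$ lying over the idempotents of $M_k(\bbC)$; the lemma preceding Theorem~\ref{basic2} then yields the Leibniz identity for $\eta$ on pairs of such idempotents, and by linearity $\eta|_C$ is a derivation of $C$. The two-kernel computation of the previous paragraph still gives $\eta(xy)=0$ for $x,y\in\ker\phi$, so $\eta$ is also correct on $(\ker\phi)^2$.

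The main obstacle is exactly the completion of this transitive case: patching $\eta|_C$ and $\eta|_{\ker\phi}$ into the global Leibniz identity, that is, handling the mixed products $cx$ and $xc$ with $c\in C$ and $x\in\ker\phi$. Here the approximating derivations are attached to the point $cx$ rather than to $x$, so one cannot simply read off $\eta(cx)=\phi(c)\eta(x)$; closing this gap is the crux, and it is where the simplicity of the $M_k(\bbC)$-bimodule and the finite dimensionality of the target must be used in earnest (in contrast to the acyclic case, which the kernel-square descent settles cleanly). Once the transitive case is secured, combining it with the acyclic case and the separation of the family $\{\phi_{v,n}\}$ yields $\delta(AB)=\delta(A)B+A\delta(B)$ for all $A,B$, completing the proof.
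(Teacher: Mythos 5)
Your acyclic case and your separation argument are sound and essentially identical to the paper's: Lemma~\ref{kernelsquare} forces every approximating derivation, and hence $\delta$ itself, to preserve $\ker\phi_{v,n}$; the pushforward is an approximately local derivation on $\phi_{v,n}(\T^+_{\G})$; and Lemma~\ref{idemotrep} plus Theorem~\ref{basic2} finish. The genuine gap is the transitive case, which you leave open --- and the step you single out as ``the crux'' (the mixed products $cx$, $xc$ after a Wedderburn splitting) is not where the argument should go at all. Your premise that $\delta$ need not preserve $\ker\phi_{v,n}$ in the transitive case is false. Since $\phi_{v,n}$ is a finite dimensional irreducible representation, $\ker\phi_{v,n}$ is a \emph{primitive ideal} of $\T^+_{\G}$, and bounded derivations on a Banach algebra leave primitive ideals invariant --- a standard fact, invoked in the paper as \cite[Proposition 6.4.16]{Pal} (recall the standing convention that all linear maps here are bounded, so each approximating derivation $\delta_A$ is continuous). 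Hence for $A\in\ker\phi_{v,n}$ one gets $\delta(A)=\delta_A(A)\in\ker\phi_{v,n}$, the induced map $\delta_{v,n}$ on $\phi_{v,n}(\T^+_{\G})=\Mat_k(\bbC)$ is well defined and approximately local exactly as in your acyclic paragraph, and Theorem~\ref{basic2} applies because $\Mat_k(\bbC)$ is generated by its idempotents. You are quite right that the kernel-square identity genuinely fails for transitive $\G_v$ (your disc-algebra point evaluation is an apt illustration), but primitivity substitutes for it: the paper's Claim uses these two different invariance mechanisms --- kernel-square density in the acyclic case, primitivity in the transitive case --- and then runs one and the same quotient argument for both.

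A further caution about the route you sketched as a fallback: lifting a matrix unit system through $\phi$ is itself problematic, since idempotents need not lift modulo a closed two-sided ideal of a Banach algebra (e.g.\ $C[0,1]$ modulo the ideal of functions vanishing at both endpoints has quotient $\bbC^2$ with a nontrivial idempotent, while $C[0,1]$ contains only $0$ and $1$); and without the kernel-square identity your factorization $\eta=D\circ\phi$ of derivations into the bimodule also breaks down, so the approximating derivations cannot be pushed forward one point at a time along those lines. So the transitive case as you set it up really is stuck; the single missing idea is the invariance of primitive ideals under continuous derivations.
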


\begin{proof} Let $\delta$ be an approximately local derivation. Consider the representations $\phi_{v, n}$, discussed above, where $v$ ranges over all points for which $\G_v$ is either acyclic or transitive and $n \in \bbN$. Because of the density assumption this family separates points.

\vspace{.14 in}
\noindent
\textbf{Claim:} \textit{ For any approximately local derivation $\hat{\delta}: \T^+_{\G} \rightarrow T^+_{\G}  $, we have  $$\hat{\delta}(\ker \phi_{v, n} ) \subseteq \ker \phi_{v, n}. $$}
\noindent Indeed let $ A \in \ker \rho_{x, n} $ and let $\delta_A$ be a derivation of  $T^+_{\G}$ so that $\hat{\delta}(A) = \delta_A(A)$. We distinguish two cases.

If $\G_v$ is transitive, then we have noted in the discussion proceeding  this theorem that $\phi_{v, n}$ is a finite dimensional irreducible representation and so $\ker \phi_{v, n}$ is a primitive ideal. Since derivations of Banach algebras leave primitive ideals invariant \cite[Proposition 6.4.16]{Pal}, $\hat{\delta}(A) = \delta_A(A) \in \ker \phi_{v, n}$.

On the other hand, if $\G_v$ is acyclic then Lemma~\ref{kernelsquare} shows that \break $(\ker \phi_{v, n})^2 = \ker \phi_{v, n}$. Hence any derivation of $\T^+_{\G} $ leaves
$\ker \rho_{x, n} $ invariant. In particular, this applies to $\delta_A$ and so $\hat{\delta}(A) = \delta_A(A) \in \ker \phi_{v, n}$ in this case as well.

\vspace{.14in}

The rest of the proof follows now familiar lines. By the Claim above, $\delta$ preserves $\ker \phi_{v, n}$ and so we obtain a map
\[
\delta_{x, n}: \T^+_{\G}  /  \ker\phi_{v, n} \longrightarrow  \phi_{v, n}(\T^+_{\G} ); A + \ker  \phi_{v, n} \mapsto  \phi_{v, n} (\delta(A)).
\]
It is easy to see, again from the Claim above, that $\delta_{x, n}$ is an an approximate local derivation. Furthermore, by Lemma \ref{idemotrep}, $\T^+_{\G}  /  \ker\phi_{v, n} $ is generated by its idempotents and so Theorem \ref{basic2} implies that $\delta_{x, n}$ is a actually a derivation. Hence
\[
\phi_{v, n}\big( \delta(AB)-\delta(A)B - A\delta(B)\big) =0.
\]
Since $\cap_{v,n}\ker\phi_{v, n}= \{0\}$, the conclusion follows.
\end{proof}

The first significant application of the above result is to Peters' semicrossed products, where nothing was previously known regarding local derivations.

\begin{definition} \label{varpoints} Let $(\Omega , \sigma)$ be a dynamical system on a locally compact Hausdorff space. A point $ x \in \Omega$ is said to be \textit{periodic} if there exists $i \in \bbN$ so that $\sigma^{(i)}(x) = x$. The point $ x \in \Omega$ is said to be \textit{eventually periodic} if $x$ is not periodic but one of its iterations  $\sigma^{(i)}(x)$, $i \in \bbN$ is periodic. Otherwise $x$ is said to be \textit{aperiodic}.
\end{definition}

Let $(\Omega , \sigma)$ be a dynamical system on a locally compact Hausdorff space. The semicrossed product $C(\Omega) \times_{\sigma} \bbZ^{+}$ is the tensor algebra corresponding to the topological graph $\G= (\G^{0}, \G^{1}, r , s)$ with $ \G^{0} =  \G^{1} =\Omega $, $s = \id$ and $r = \sigma$. If $v  \in  \Omega$  then
\[
\G_{v}^{0} = \G_{v}^{1} = \O_v \equiv \{ v , \sigma(v), \sigma^{(2)}(v),  \dots \},
\]
ie., the orbit of $v$. If $v$ is periodic with period $n$, then the corresponding graph $\G_v$ is the $n$-cycle graph $\C_n$ that has been studied quite extensively in the graph algebra theory; this is a transitive graph. On the other hand, if $v$ is aperiodic then $\G_v$ is an infinite countable graph with exactly one edge starting from each $\sigma^{(n)}(v)$ (and ending at $\sigma^{(n+1)}(v)$), $n =, 0, 1, 2, \dots$. This graph is acyclic.

There also a third possibility that the point $v \in \Omega$ is eventually periodic for $\sigma$. In that case the graph $\G_v$ is a  combination of the above graphs. If $k \in \bbN$ is the least positive integer so that $\sigma^{(k)}(v)$ is periodic, then at $\sigma^{(k)}(v)$ the graph $\G_v$ supports an $n$-cycle graph ($n$ is the period of $\sigma^{(k)}(v)$) and receives a tail starting at $v$. We do not know how to deal with that case as it is not covered by Theorem \ref{mainder} and so in the next result we assume that such points are topologically insignificant.

\begin{corollary} \label{semicrossder}
Let $(\Omega , \sigma)$ be a dynamical system on a locally compact Hausdorff space $\Omega$. Assume that the eventually periodic points of $\sigma$ have empty interior, e.g, $\sigma$ is a homeomorphism. Then any approximately local derivation on $C(\Omega) \times_{\sigma} \bbZ^{+}$ is a derivation. Hence $\Z^1(C(\Omega) \times_{\sigma} \bbZ^{+})$ is reflexive.
\end{corollary}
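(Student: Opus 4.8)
The plan is to realize $C(\Omega) \times_{\sigma} \bbZ^+$ as the tensor algebra of an appropriate topological graph and then verify that the hypotheses of Theorem \ref{mainder} are satisfied, so that the conclusion follows at once. First I would recall from the discussion preceding this corollary that $C(\Omega) \times_{\sigma} \bbZ^+$ is exactly the tensor algebra $\T_{\G}^+$ of the topological graph $\G = (\G^0, \G^1, r, s)$ with $\G^0 = \G^1 = \Omega$, $s = \id$ and $r = \sigma$. Because $s = \id$, we have $s^{-1}(\{v\}) = \{v\}$ for every $v \in \Omega$, so $\G$ is source finite and the standing finiteness hypothesis of Theorem \ref{mainder} holds automatically.

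Next I would examine the associated discrete graph $\G_v$ as $v$ ranges over $\Omega$. As already noted, $\G_v^0 = \G_v^1 = \O_v$ is the forward orbit of $v$, and the trichotomy of dynamical behavior translates directly into the structure of $\G_v$: if $v$ is periodic then $\G_v$ is a cycle, hence transitive; if $v$ is aperiodic then $\G_v$ is a one-sided infinite path, hence acyclic; and if $v$ is eventually periodic (but not periodic) then $\G_v$ is a tail attached to a cycle, which is neither acyclic nor transitive. Therefore the set of $v$ for which $\G_v$ is acyclic or transitive is precisely the complement in $\Omega$ of the set of eventually periodic points. The hypothesis that the eventually periodic points have empty interior says exactly that this complement is dense, which is the density hypothesis of Theorem \ref{mainder}. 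Invoking the theorem gives that every approximately local derivation on $\T_{\G}^+ = C(\Omega) \times_{\sigma} \bbZ^+$ is a derivation, and the reflexivity of $\Z^1(C(\Omega) \times_{\sigma} \bbZ^+)$ is merely a reformulation of this, as explained in the introduction.

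Finally I would settle the parenthetical case of a homeomorphism, where the claim is that the empty-interior hypothesis is automatic because there are no eventually periodic points at all. Indeed, if $v$ were eventually periodic, with $\sigma^{(k)}(v)$ periodic of period $n$ for some $k \geq 1$, then $\sigma^{(k)}(\sigma^{(n)}(v)) = \sigma^{(n)}(\sigma^{(k)}(v)) = \sigma^{(k)}(v)$, and applying the inverse of the homeomorphism $\sigma^{(k)}$ forces $\sigma^{(n)}(v) = v$; thus $v$ is periodic, contradicting the assumption that $v$ is eventually periodic.

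The argument is essentially a matter of matching hypotheses, so I do not anticipate a serious obstacle. The one point requiring care is the identification of the good vertices: one must confirm that the eventually periodic orbits are exactly those producing a graph $\G_v$ that is neither acyclic nor transitive, so that the empty-interior condition is precisely what is needed to make the separating family of representations from Theorem \ref{mainder} available.
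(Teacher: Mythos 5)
Your proposal is correct and follows exactly the route the paper intends: the corollary is stated without a separate proof precisely because the discussion preceding it identifies $C(\Omega)\times_{\sigma}\bbZ^{+}$ with the tensor algebra of the graph $\G^{0}=\G^{1}=\Omega$, $s=\id$, $r=\sigma$, notes that periodic points yield transitive cycle graphs $\C_n$, aperiodic points yield acyclic infinite paths, and eventually periodic points yield a tail attached to a cycle (neither acyclic nor transitive), so that the empty-interior hypothesis is exactly the density hypothesis of Theorem \ref{mainder}. Your two added verifications --- that $s=\id$ makes the graph source finite, and that a homeomorphism has no eventually periodic points since injectivity of $\sigma^{(k)}$ forces $\sigma^{(n)}(v)=v$ --- are points the paper leaves implicit, and both are argued correctly.
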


Let $\Omega$ be a compact Hausdorff space and let $\sigma: \Omega \rightarrow \Omega$ be a covering map. Consider the topological graph $\G_{\text{EV}}(\Omega, \sigma)=  (\G^{0}, \G^{1}, r , s)$, where $\G^0 = \G^1 = \Omega$, $r = \id$ and $s = \sigma$. Assume further that the system $(\Omega, \sigma)$ is \textit{topologically free}, i.e., the sets $\Omega_{m ,n } \equiv \{ v \in \Omega \mid \sigma^{(m)}(v) = \sigma^{(n)}(v)\}$ have empty interior for all $m \neq n$. It is known \cite[Theorem 6.1]{BRV} that the Cuntz-Pimsner $\ca$-algebra associated with $\G_{\text{EV}}(\Omega, \sigma)$ is isomorphic to the Exel crossed product $\ca$-algebra $C( \Omega )\rtimes _{\sigma, L} \bbN$, where $L$ is the transfer operator that averages over inverse images of points. The condition of topological freeness was isolated by Exel and Vershik in their original work \cite{EV} and was further studied in \cite{BR, BRV, CS}. It is consequence of these works that the tensor algebra $\T^+_{\text{EV}}(\Omega, \sigma)$ for $\G_{\text{EV}}(\Omega, \sigma)$ admits the following elegant description

\begin{example} A faithful representation for a $\T^+_{\text{EV}}(\Omega, \sigma)$, when $(\Omega, \sigma)$ is topologically free.

Let $\H$ be a Hilbert space with orthonormal base $\{e_v\}_{v \in \Omega}$. For $f \in C(\Omega)$ let $M_f$ be the multiplication operator $M_f (e_v) = f(v)e_v$, $v \in \Omega$, and
\[
S(e_v) = \big(L(1_{\Omega})(v)\big)^{-1/2}\sum_{w \in \sigma^{-1}(\{v\})} \, e_w
\]
In \cite[Theorem 6]{CS} it is shown that the $\ca$-algebra generated by $M_f$, $f \in C(\Omega)$, and $S$ is canonically isomorphic to $C( \Omega )\rtimes _{\sigma, L} \bbN$, i.e., with generators going to generators. Furthermore, it is a consequence of \cite[Proposition 3.1]{BRV} and  \cite[Theorem 6.1]{BRV} that $C( \Omega )\rtimes _{\sigma, L} \bbN$ is once again canonically isomorphic to $\O_{\text{EV}}(\Omega, \sigma)$. This implies that $\T^+_{\text{EV}}(\Omega, \sigma)$ is isomorphic to the non-selfadjoint algebra generated by $M_f$, $f \in C(\Omega)$, and $S$.
\end{example}

\begin{corollary} Let $\Omega$ be a compact Hausdorff space and let $\sigma: \Omega \rightarrow \Omega$ be a topologically free covering map. If $\G_{\text{EV}}(\Omega, \sigma)=  (\G^{0}, \G^{1}, r , s)$, where $\G^0 = \G^1 = \Omega$, $r = \id$ and $s = \sigma$, then any approximately local derivation on $\T^+_{\text{EV}}(\Omega, \sigma)$ is a derivation.
\end{corollary}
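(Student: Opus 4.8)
The plan is to verify that $\G_{\text{EV}}(\Omega,\sigma)$ satisfies the two standing hypotheses of Theorem \ref{mainder}, namely source finiteness and density of the set of points $v$ for which $\G_v$ is acyclic or transitive. Source finiteness is immediate: here $s=\sigma$, so $s^{-1}(\{v\})=\sigma^{-1}(\{v\})$, and this fibre is discrete (since $\sigma$ is a covering map) and closed (by continuity), hence finite as a closed discrete subset of the compact space $\Omega$. Thus $\G_{\text{EV}}(\Omega,\sigma)$ is source finite, and the real work is the analysis of the associated discrete graphs $\G_v$.

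Next I would unwind the inductive definition of $\G_v$ for the present choice $r=\id$, $s=\sigma$ (the roles of $r$ and $s$ being reversed from the semicrossed case). Starting from $\G^{0}_{v,0}=\{v\}$ and $\G^{1}_{v,0}=s^{-1}(\{v\})=\sigma^{-1}(\{v\})$, one computes $\G^{0}_{v,n}=\sigma^{-n}(\{v\})$ and $\G^{1}_{v,n}=\sigma^{-(n+1)}(\{v\})$, so the vertex set of $\G_v$ is the full backward orbit $\bigcup_{n\geq 0}\sigma^{-n}(\{v\})$ and each edge $e$ runs from $\sigma(e)$ to $e$. A path $e_k\cdots e_1$ is then a chain with $\sigma(e_{i+1})=e_i$, whose source is $\sigma^{(k)}(e_k)$ and whose range is $e_k$; it is therefore a cycle precisely when $\sigma^{(k)}(e_k)=e_k$, i.e. when some vertex of $\G_v$ is periodic. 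Since every vertex of $\G_v$ lies in the backward orbit of $v$ and a forward image of a periodic point is again periodic, this happens if and only if $v$ itself is periodic. Hence $\G_v$ is acyclic exactly when $v$ is not periodic. In particular, in contrast with Corollary \ref{semicrossder}, even the eventually periodic points yield acyclic graphs here, so only the acyclic alternative of Theorem \ref{mainder} is needed.

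Finally I would establish density of the non-periodic points by a Baire category argument. The periodic points form the set $\bigcup_{k\geq 1}\Omega_{k,0}$, where $\Omega_{k,0}=\{v\in\Omega\mid \sigma^{(k)}(v)=v\}$ is closed (as the equalizer of $\sigma^{(k)}$ and $\id$ in the Hausdorff space $\Omega$) and, by topological freeness, has empty interior. Thus the periodic points form a meager subset of the compact Hausdorff, hence Baire, space $\Omega$, so its complement, namely the set of $v$ with $\G_v$ acyclic, is dense in $\G^0=\Omega$. Theorem \ref{mainder} then applies verbatim and gives the conclusion that every approximately local derivation on $\T^+_{\text{EV}}(\Omega,\sigma)$ is a derivation.

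I expect the main obstacle to be the combinatorial identification of $\G_v$ together with the clean characterization that $\G_v$ is acyclic if and only if $v$ is non-periodic: one must carefully track the reversed edge orientation coming from $r=\id$, $s=\sigma$ and confirm that a cycle forces $v$ itself to be periodic. Once that is in place, the density step is a routine application of Baire's theorem to the closed nowhere-dense sets $\Omega_{k,0}$ furnished by topological freeness, and no use of the transitive alternative is required.
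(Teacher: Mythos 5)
Your proof is correct, and its skeleton is the same as the paper's: show that $\G_v$ is acyclic for a dense set of $v\in\G^0$, then invoke Theorem \ref{mainder}. However, your key combinatorial lemma differs from the paper's, and is in fact sharper. The paper asserts that $\G_v$ is acyclic iff $v\notin\bigcup_{m\neq n}\Omega_{m,n}$ and applies the Baire category theorem to the full family $\{\Omega_{m,n}\}_{m\neq n}$. You instead prove an exact equivalence: because $r=\id$ and $s=\sigma$, the graph $\G_v$ is carried by the backward orbit of $v$, a cycle produces a periodic point in that backward orbit, and any periodic $u$ with $\sigma^{(n)}(u)=v$ forces $v$ itself to be periodic; hence $\G_v$ is acyclic iff $v$ is not periodic. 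This is strictly finer than the paper's claim: an eventually periodic but non-periodic point (for instance $v=-1$ for $\sigma(z)=z^2$ on $\bbT$) lies in $\bigcup_{m\neq n}\Omega_{m,n}$ and yet has $\G_v$ acyclic, so the paper's ``iff'' is really only valid in the one direction it uses, whereas your equivalence is genuine and needs only the sets $\Omega_{k,0}$, $k\geq 1$, to be nowhere dense --- a weaker consequence of topological freeness. You also verify source finiteness directly (fibres of a covering map are discrete and closed, hence finite in the compact space $\Omega$), a hypothesis of Theorem \ref{mainder} that the paper covers only through its earlier blanket remark that compactness of $\G^1$ suffices. Both arguments are complete; yours isolates the minimal hypotheses and quietly repairs a harmless overstatement in the paper's one-line justification.
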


\begin{proof}
 It is not difficult to see that for a point $v \in \Omega$, the corresponding discrete graph $\G_v$ is acyclic iff $v \notin \cup_{m \neq n} \Omega_{m, n}$. By the Baire Category Theorem, $\cup_{m \neq n} \Omega_{m, n}$ has empty interior and the conclusion follows from Theorem \ref{mainder}
 \end{proof}

Another application of Theorem \ref{mainder} takes place for certain tensor algebras that materialize as subalgebras of Exel's crossed products by partial automorphisms. Let $\Omega$ be a locally compact space and $\sigma$ a partial homeomorphism, i.e.,  $\sigma: U\rightarrow V$ is a homeomorphism between two open subset $U,V$ of $\Omega$. Let $\theta: C_0(U) \rightarrow C_0 (V)$ be the $*$-automorphism induced by $\sigma$. The triple $(\theta , C_0(U) , C_0(V))$ is called a partial automorphism of $C_0(\Omega)$. In \cite[Definition 3.7]{Exel} Exel associates a $\ca$-algebra $C_0(\Omega) \times_{\theta} \bbZ$ with the partial automorphism $(\theta , C_0(U) , C_0(V))$. This algebra is defined through a universal property but Katsura has shown that it corresponds to the Cuntz-Pimsner algebra corresponding to the graph $\G=  (\G^{0}, \G^{1}, r , s)$, where $\G_0 = \Omega$, $\G_1= U$, $r = \sigma$ and $s$ is the natural embedding.

\begin{corollary} \label{semicrossderExel}
Let $C_0(\Omega) \times_{\theta} \bbZ$ be Exel's crossed product by the partial automorphism $(\theta , C_0(U) , C_0(V))$ of $\Omega$ and let $C_0(\Omega) \times_{\theta} \bbZ^+$ be the associated tensor algebra. Then any approximately local derivation on $C_0(\Omega) \times_{\theta} \bbZ^+$ is a derivation.
\end{corollary}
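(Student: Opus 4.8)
The plan is to recognise $C_0(\Omega) \times_{\theta} \bbZ^+$ as the tensor algebra $\T^+_{\G}$ of the topological graph $\G = (\G^0, \G^1, r, s)$ recalled just before the statement, with $\G^0 = \Omega$, $\G^1 = U$, $r = \sigma$ and $s$ the natural inclusion $U \hookrightarrow \Omega$, and then to verify the two hypotheses of Theorem \ref{mainder}. Source finiteness is immediate: since $s$ is the inclusion of $U$ into $\Omega$, the fibre $s^{-1}(\{w\})$ equals $\{w\}$ when $w \in U$ and is empty otherwise, so it is finite for every $w \in \G^0$. It then remains only to establish the density condition on the family $\{\G_v\}_{v \in \Omega}$.

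To that end I would unwind the inductive construction of $\G_v$ from the proof of Theorem \ref{nestrepn}. Because $s$ is the inclusion and $r = \sigma$, the unique edge with source a given vertex $w$ is $w$ itself, present precisely when $w \in U$, and it runs from $w$ to $\sigma(w)$. Consequently the reachable vertex set $\G_v^0$ is exactly the forward orbit $\{v, \sigma(v), \sigma^{(2)}(v), \dots\}$, continued for as long as the current point lies in $U$, and $\G_v$ carries a single outgoing edge at each such point. Thus $\G_v$ is a simple path or a cycle, according to whether this forward orbit repeats.

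The key step, and the one that makes the partial automorphism case cleaner than both the semicrossed product case and the Exel--Vershik case, is that $\sigma$ is \emph{injective}, being a homeomorphism $U \to V$; I claim this forces every $\G_v$ to be acyclic or transitive, ruling out the troublesome eventually periodic behaviour. Indeed, suppose the forward orbit repeats, say $\sigma^{(m)}(v) = \sigma^{(n)}(v)$ with $m > n \geq 0$. While $n \geq 1$, both $\sigma^{(m-1)}(v)$ and $\sigma^{(n-1)}(v)$ lie in $U$ (they are the sources of edges of $\G_v$), so injectivity of $\sigma$ gives $\sigma^{(m-1)}(v) = \sigma^{(n-1)}(v)$; iterating until $n = 0$ yields $\sigma^{(m-n)}(v) = v$, so the orbit is genuinely periodic. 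Hence for every $v \in \Omega$ the graph $\G_v$ is either an $\ell$-cycle $\C_{\ell}$, when $v$ is periodic of period $\ell$ (a transitive graph), or else a repetition-free path (an acyclic graph). The acyclic-or-transitive set is therefore all of $\Omega$, a fortiori dense, so Theorem \ref{mainder} applies and delivers the conclusion. The one point demanding care is the cancellation argument above: one must confirm that the intermediate orbit points used to strip powers of $\sigma$ genuinely lie in the domain $U$, which holds because they occur as sources of edges in $\G_v$.
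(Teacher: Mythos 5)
Your proposal is correct and takes exactly the route the paper intends: the corollary is presented as an immediate application of Theorem \ref{mainder} to the graph $\G^0=\Omega$, $\G^1=U$, $r=\sigma$, $s$ the natural inclusion, with the verification of the hypotheses left unwritten. Your key observation --- that injectivity of $\sigma$ on $U$ forces every $\G_v$ to be either a repetition-free (acyclic) path or a cycle $\C_\ell$ (transitive), so the eventually periodic behaviour that obstructs Corollary \ref{semicrossder} cannot occur and the density hypothesis holds trivially --- is precisely the point that makes the corollary hold without further assumptions, and your source-finiteness check is likewise the right (and complete) one.
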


We now focus on tensor algebras for multivariable dynamical systems. If  $(\Omega, \sigma)$ is such a system, with $\sigma = (\sigma_1, \sigma_2, \dots , \sigma_n)$, and $u = i_k i_{k-1} \dots i_1 \in \bbF_n^+$ then we write $\sigma_u \equiv \sigma_{i_k}\circ \sigma_{i_{k-1}}\circ \dots \circ \sigma_{i_1}$.

\begin{corollary}
Let $(\Omega, \sigma)$ be a multivariable system on a locally compact space $\Omega$ and assume that the set
\[
\Omega_0 = \Big \{ v \in \Omega \mid \sigma_u (v)=\sigma_w(v), \mbox{ for some } u,w \in \bbF_n^+ \mbox{ with }|u|\neq |w| \Big \}
\]
has empty interior. Then any approximately local derivation on $\T^+(\Omega, \sigma)$ is a derivation.
\end{corollary}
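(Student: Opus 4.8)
The plan is to reduce this statement to Theorem \ref{mainder} by verifying that its density hypothesis holds under the stated assumption on $\Omega_0$. The multivariable system $(\Omega, \sigma)$ with $\sigma = (\sigma_1, \dots, \sigma_n)$ gives rise to the topological graph $\G(\Omega, \sigma)$ described earlier, where $\G^0 = \Omega$, $\G^1 = \{1, 2, \dots, n\} \times \Omega$, $s$ is projection onto $\Omega$, and $r(i, x) = \sigma_i(x)$. This graph is source finite, since $s^{-1}(\{v\}) = \{1, \dots, n\} \times \{v\}$ has exactly $n$ elements for each $v \in \Omega$. So both structural hypotheses of Theorem \ref{mainder}---source finiteness and density of the good points---are what remain to be checked, and only the latter requires real work.

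First I would identify, for a fixed $v \in \Omega$, the structure of the associated discrete graph $\G_v$. By construction the vertices of $\G_v$ are the points $\sigma_u(v)$ reachable from $v$ by finite words $u \in \bbF_n^+$, and the paths of length $k$ in $\G_v$ correspond to words $u = i_k \cdots i_1$, terminating at $\sigma_u(v)$. A path of positive length returning to its starting vertex would mean $\sigma_u(v') = v'$ for some vertex $v' = \sigma_w(v)$ and some nonempty word $u$; more generally the obstruction to acyclicity is precisely the existence of two distinct words $u, w$ with $\sigma_u(v) = \sigma_w(v)$ but $|u| \neq |w|$, since this produces two paths of different lengths sharing endpoints and hence a cycle in $\G_v$. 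Thus I expect to prove the clean dichotomy that $\G_v$ is acyclic whenever $v \notin \Omega_0$.

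The main step, then, is to show that $v \notin \Omega_0 \implies \G_v$ acyclic, and conversely that any length-grading collision forces membership in $\Omega_0$. The forward direction is the one to get right: if $\G_v$ contained a cycle, there would be a vertex $v' = \sigma_w(v)$ and a nonempty word $a$ with $\sigma_a(v') = v'$, giving $\sigma_{aw}(v) = \sigma_w(v)$ with $|aw| = |a| + |w| > |w|$, so $v$ satisfies the defining condition of $\Omega_0$ with the pair $aw, w$. Contrapositively, $v \notin \Omega_0$ yields no such cycle and $\G_v$ is acyclic. Since $\Omega_0$ has empty interior by hypothesis, its complement is dense in $\Omega$, and on that dense set every $\G_v$ is acyclic---which is stronger than the ``acyclic or transitive'' requirement of Theorem \ref{mainder}.

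With the density of acyclic points established, the proof concludes immediately by invoking Theorem \ref{mainder}: any approximately local derivation on $\T^+(\Omega, \sigma)$ is a derivation. The hard part will be pinning down the exact combinatorial translation between words in $\bbF_n^+$ and paths in $\G_v$, being careful that a ``collision'' $\sigma_u(v) = \sigma_w(v)$ at the level of the original system corresponds faithfully to coinciding path endpoints in the discrete graph $\G_v$, and that the length discrepancy $|u| \neq |w|$ is exactly what manufactures a genuine cycle rather than merely a repeated vertex. Everything else is a direct citation of the already-established machinery.
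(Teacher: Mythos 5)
Your proposal is correct and takes essentially the same route as the paper, whose entire proof is the observation that $\G_v$ is acyclic whenever $v \in \Omega \setminus \Omega_0$ (your cycle-to-collision argument: a cycle at $v' = \sigma_w(v)$ via a nonempty word $a$ gives $\sigma_{aw}(v) = \sigma_w(v)$ with $|aw| > |w|$), followed by the citation of Theorem \ref{mainder} using density of $\Omega \setminus \Omega_0$. One small caveat: your parenthetical claim that the obstruction is \emph{precisely} a length-mismatched collision is an overstatement---two directed paths with common endpoints and different lengths need not produce a directed cycle, so $v \in \Omega_0$ does not force $\G_v$ to have a cycle---but since your argument only uses the direction you proved (cycle implies $v \in \Omega_0$), this does not affect the proof.
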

\begin{proof} It is easy to verify that $\G_v$ is acyclic provided that $v \in \Omega \backslash \Omega_0$.
\end{proof}

The assumptions of the above Corollary hold in particular when $\Omega = \bbT$, and $\sigma_k(z) = e^{2\pi i \theta_k}z$, $k=1,2$, with $\theta_1 , \theta_2$ irrational satisfying $\theta_1/ \theta_2 \notin \bbQ$.

\begin{corollary} \label{group}
Let $\fG$ be a finite group acting on a locally compact Hausdorff space $\Omega$ via a family of automorphisms $\sigma_{\fG} \equiv \{\sigma_g \}_{g \in \fG}$. Then any approximately local derivation on $\T^+(\Omega, \sigma_{\fG})$ is a derivation.
\end{corollary}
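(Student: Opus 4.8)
The plan is to deduce this directly from Theorem \ref{mainder}, so the whole task reduces to identifying the family of discrete graphs $\{\G_v\}_{v \in \Omega}$ associated with the topological graph underlying $\T^+(\Omega, \sigma_{\fG})$ and checking the two hypotheses there. Recall that this graph $\G(\Omega, \sigma_{\fG})$ has $\G^0 = \Omega$ and $\G^1 = \fG \times \Omega$, with $s(g,x) = x$ and $r(g,x) = \sigma_g(x)$; since each $\sigma_g$ is a homeomorphism of $\Omega$, the map $r$ is continuous and proper, while $s$ is a local homeomorphism (a homeomorphism on each sheet $\{g\}\times\Omega$), so this is genuinely a topological graph. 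The first thing I would record is source finiteness: because $\fG$ is finite, $s^{-1}(\{v\}) = \fG \times \{v\}$ has exactly $|\fG|$ elements for every $v$, so the source-finiteness assumption of Theorem \ref{mainder} holds automatically (and the representations $\phi_{v,n}$ produced there are genuinely finite dimensional).

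Next I would compute $\G_v$ explicitly via the inductive description from the proof of Theorem \ref{nestrepn}. Starting from $\G_{v,0}^0 = \{v\}$, one step gives $\G_{v,1}^0 = r\big(s^{-1}(\{v\})\big) = \{\sigma_g(v) \mid g \in \fG\}$, which is the $\fG$-orbit of $v$; I will write this finite set (of at most $|\fG|$ points) as $\fG\cdot v$. Because $\sigma_{\fG}$ is a genuine action of a group, so that $\sigma_{gh} = \sigma_g\circ\sigma_h$ and $\sigma_e = \id$, the orbit $\fG\cdot v$ is invariant under every $\sigma_g$; hence $\G_{v,n}^0 = \fG\cdot v$ for all $n \geq 1$, the vertex set stabilizes immediately, and $\G_v^0 = \fG\cdot v$ while $\G_v^1 = s^{-1}(\fG\cdot v) = \fG \times (\fG\cdot v)$.

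The crucial point — and the only place where the group structure rather than a mere semigroup structure is used — is that each $\G_v$ is \emph{transitive}. Given two vertices $v' = \sigma_{g_1}(v)$ and $v'' = \sigma_{g_2}(v)$ in the orbit $\fG\cdot v$, the group element $g = g_2 g_1^{-1}$ satisfies $\sigma_g(v') = \sigma_{g_2 g_1^{-1}}\big(\sigma_{g_1}(v)\big) = \sigma_{g_2}(v) = v''$, so the edge $(g, v') \in \G_v^1$ runs from $v'$ to $v''$. Thus there is even a length-one path between any ordered pair of vertices, and $\G_v$ is transitive for \emph{every} $v \in \Omega$.

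Consequently the set of points $v$ for which $\G_v$ is transitive is all of $\Omega$, which is trivially dense, and Theorem \ref{mainder} applies verbatim to yield the conclusion. I do not anticipate a genuine obstacle: the entire content is the orbit computation together with the observation that invertibility of the group elements forces each $\G_v$ to be transitive, since all of the analytic machinery (the representations $\phi_{v,n}$, their idempotent generation, and the passage from local derivations to derivations) is already packaged inside Theorem \ref{mainder}.
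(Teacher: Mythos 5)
Your proposal is correct and follows essentially the same route as the paper: both reduce the statement to Theorem \ref{mainder} by showing that $\G_v$ is transitive for \emph{every} $v \in \Omega$, with transitivity forced by the invertibility of the group elements. The only (minor) difference is in how transitivity is verified --- the paper builds a path from $v$ to an arbitrary vertex and then reverses it edge-by-edge using inverses, while you compute $\G_v^0 = \fG \cdot v$ and $\G_v^1 = \fG \times (\fG \cdot v)$ explicitly and exhibit a single edge $(g_2 g_1^{-1}, v')$ joining any ordered pair of vertices; your version is slightly more direct and also records the source-finiteness hypothesis explicitly, which the paper leaves implicit.
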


\begin{proof} We claim that for any $v \in \Omega$, the graph $ \G_v$ of Theorem \ref{nestrepn} is transitive, i.e., given any two vertices $v', v'' \in \Gxvertex$ there exists a path starting at $v'$ and ending at $v''$. Indeed by construction for any $v' \in \Gxvertex$ there is a path of the form
\[
(g_i ,\sigma_{ g_i^{-1}}(v')), \dots (g_2, \sigma_{g_1}(v))(g_1, v), \quad g_k \in \fG, k=1,2, \dots ,i
\]
starting at $v$ and ending at $v'$. Since $\fG$ is a group, the opposite path
\[
(g_1^{-1}, g_1(v))\dots(g_{i-1}^{-1}, g_i^{-1}(v'))(g_i^{-1}, v')
\]
exists thus establishing the existence of paths between $v$ and any $v'$ in both directions. Thus $\G_v$ is transitive.
The result follows now from \break Theorem~\ref{mainder}.
\end{proof}

We observed in Proposition \ref{RFD} that the semicrossed product $\A \times_{\sigma} \bbF_n^+$ for a multivariable $\ca$-dynamical system $(\A, \sigma)$ admits a separating family $\hat{\pi}_{i}$, $i \in \bbI$ of finite dimensional irreducible representations. Since the kernels of these representation are primitive ideals of $\A \times_{\sigma} \bbF_n^+$, any approximately local derivation leaves them invariant. Therefore by following the same steps as in the proof of Theorem \ref{semicrossder} we obtain

\begin{proposition} \label{RFDder}
Let $(\A , \sigma)$ be an automorphic multivariable \break $\ca$-dynamical system and assume that the crossed product $\ca$-algebra $\A \times_{\sigma} \bbF_n$ is residually finite dimensional \textup{(RFD)}. Then any approximately local derivation on $\A \times_{\sigma} \bbF_n^+$ is a derivation.
\end{proposition}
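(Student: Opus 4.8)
The plan is to transcribe the argument of Theorem \ref{mainder} (equivalently Theorem \ref{semicrossder}), replacing the representations $\phi_{v,n}$ by the separating family $\hat{\pi}_i$, $i \in \bbI$, of finite dimensional irreducible representations of $\A \times_\sigma \bbF_n^+$ produced in Proposition \ref{RFD}. The RFD hypothesis enters only through that proposition; after it is invoked, the two facts one actually needs are that $\delta$ leaves each $\ker \hat{\pi}_i$ invariant and that each quotient $\A \times_\sigma \bbF_n^+ / \ker \hat{\pi}_i$ is generated by its idempotents. Once these are secured, Theorem \ref{basic2} closes the argument exactly as in the earlier proofs.

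First I would establish invariance of the kernels. Because each $\hat{\pi}_i$ is irreducible, $\ker \hat{\pi}_i$ is a norm-closed primitive ideal of $\A \times_\sigma \bbF_n^+$. Given an approximately local derivation $\delta$ and $A \in \ker \hat{\pi}_i$, write $\delta(A) = \lim_k \delta_{A,k}(A)$ with each $\delta_{A,k}$ a genuine derivation. Since derivations of a Banach algebra leave primitive ideals invariant \cite[Proposition 6.4.16]{Pal}, every $\delta_{A,k}(A)$ lies in $\ker \hat{\pi}_i$, and hence so does the limit $\delta(A)$. Thus $\delta(\ker \hat{\pi}_i) \subseteq \ker \hat{\pi}_i$ and we obtain a well-defined map
\[
\delta_i : \A \times_\sigma \bbF_n^+ / \ker \hat{\pi}_i \longrightarrow \hat{\pi}_i(\A \times_\sigma \bbF_n^+); \quad A + \ker \hat{\pi}_i \longmapsto \hat{\pi}_i(\delta(A)),
\]
which one checks is again an approximately local derivation, the right action on $\hat{\pi}_i(\A \times_\sigma \bbF_n^+)$ coming from $\hat{\pi}_i$.

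Next I would identify the quotient. As recorded in the proof of Proposition \ref{RFD}, $\hat{\pi}_i(\A \times_\sigma \bbF_n^+) = \pi_i(\A \times_\sigma \bbF_n)$, and since $\pi_i$ acts irreducibly on a finite dimensional space this image is a full matrix algebra. Consequently $\A \times_\sigma \bbF_n^+ / \ker \hat{\pi}_i$ is a finite dimensional $\ca$-algebra, hence generated by its idempotents, so Theorem \ref{basic2} forces each $\delta_i$ to be a derivation. Translating back gives $\hat{\pi}_i\big( \delta(AB) - \delta(A)B - A\delta(B) \big) = 0$ for every $i \in \bbI$, and since the family separates points, $\cap_i \ker \hat{\pi}_i = \{0\}$, we conclude that $\delta$ is a derivation.

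I expect the only point requiring care to be the limiting step in the invariance argument: for the genuinely \emph{approximate} version one must exploit that $\ker \hat{\pi}_i$ is norm-closed, so that $\delta(A) = \lim_k \delta_{A,k}(A)$ stays in the ideal. Everything past that is a routine transcription of the proof of Theorem \ref{semicrossder}, and no new structure theory for $\A \times_\sigma \bbF_n^+$ is needed beyond the separating family already in hand.
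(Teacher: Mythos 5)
Your proposal is correct and follows essentially the same route as the paper: the paper also invokes the separating family $\hat{\pi}_i$ from Proposition \ref{RFD}, notes that the kernels are primitive ideals left invariant by (approximately local) derivations via \cite[Proposition 6.4.16]{Pal}, and then repeats the quotient-plus-Theorem \ref{basic2} argument of Theorem \ref{mainder}. Your added details --- the norm-closedness of the kernels for the approximate limiting step and the identification of each quotient as a full matrix algebra, hence generated by idempotents --- are exactly the points the paper leaves implicit in its phrase ``following the same steps as in the proof of Theorem \ref{semicrossder}.''
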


In particular, Proposition \ref{RFDder} applies to the universal algebra $A (\bbF_n^+) \subseteq \ca(\bbF_n)$ generated by $n$ contractions (see the proof of Proposition \ref{Choi}). Here is another application coming from \cite{DavKatMem}.

\begin{corollary}
Let $\sigma = (\sigma_1, \sigma_2)$ be a multivariable system on the two-point space $\Omega = \{0,1\}$, with $\sigma_1=\id$ and $\sigma_2(i) =i+1 \mod 2$, $i=0,1$, Then any approximately local derivation on $C(\Omega) \times_{\sigma} \bbF_2^+$ is a derivation.
\end{corollary}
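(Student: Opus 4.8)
The plan is to verify the two hypotheses of Proposition~\ref{RFDder} for the $\ca$-dynamical system $(\A,\sigma)$ with $\A = C(\Omega) = \bbC^2$, after which the conclusion is immediate. The first hypothesis, that the system is automorphic, requires no work: both $\sigma_1 = \id$ and $\sigma_2$ are homeomorphisms of the finite discrete space $\Omega=\{0,1\}$, so they induce $*$-automorphisms of $\bbC^2$ (the identity and the coordinate swap, respectively). Everything therefore reduces to the second hypothesis, namely that the full crossed product $\ca$-algebra $\bbC^2\times_\sigma\bbF_2$ is residually finite dimensional, and this is where the content lies.

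To establish the RFD property I would identify the crossed product explicitly rather than argue abstractly. The action of $\bbF_2 = \langle g_1,g_2\rangle$ on the two points of $\Omega$ factors through the homomorphism $\rho\colon\bbF_2\to\mathrm{Sym}(\Omega)=\bbZ/2$ determined by $g_1\mapsto\id$ and $g_2\mapsto$ the transposition. Since $g_2$ interchanges the two points, this action is transitive, so $\Omega$ is equivariantly identified with $\bbF_2/H$, where $H=\ker\rho$ is the common stabilizer of the points. By Green's imprimitivity theorem applied to the transitive action on the two-point set $\bbF_2/H$,
\[
\bbC^2\times_\sigma\bbF_2 \;\cong\; \Mat_2\big(\ca(H)\big).
\]
As $H$ has index $2$ in $\bbF_2$, the Nielsen--Schreier formula gives $\mathrm{rank}(H)=1+2(2-1)=3$, hence $H\cong\bbF_3$ and $\bbC^2\times_\sigma\bbF_2\cong\Mat_2(\ca(\bbF_3))$. (This identification is also implicit in \cite{DavKatMem}.)

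It then remains to observe that $\Mat_2(\ca(\bbF_3))$ is RFD. By Choi's theorem \cite{Choi}, the full free-group $\ca$-algebra $\ca(\bbF_3)$ admits a separating family $\{\pi_i\}$ of finite dimensional representations; amplifying, the family $\{\id_{\Mat_2}\otimes\,\pi_i\}$ separates the points of $\Mat_2(\ca(\bbF_3))$, because a matrix over $\ca(\bbF_3)$ is killed by all of these amplifications only if each of its entries is killed by all $\pi_i$. Thus $\bbC^2\times_\sigma\bbF_2$ is RFD, and Proposition~\ref{RFDder} yields the corollary. The main obstacle is precisely this RFD verification: the naive route---writing $\bbC^2\times_\sigma\bbF_2$ as the amalgamated free product over $\bbC^2$ of the two single-generator crossed products $\bbC^2\times_{\sigma_1}\bbZ$ and $\bbC^2\times_{\sigma_2}\bbZ$ and invoking RFD of the two factors---fails, since RFD is not preserved under amalgamation in general. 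It is the imprimitivity-theorem identification with a matrix algebra over a free-group $\ca$-algebra that sidesteps this difficulty and makes Choi's theorem directly applicable.
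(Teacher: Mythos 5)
Your proof is correct and takes essentially the same route as the paper, which simply observes (citing \cite[Example 3.24]{DavKatMem}) that $C(\Omega) \times_{\sigma} \bbF_2 \simeq \Mat_2 (\ca(\bbF_3))$, notes this algebra is RFD, and invokes Proposition \ref{RFDder}. Your derivation of that isomorphism via Green's imprimitivity theorem and the Nielsen--Schreier index formula is a correct, self-contained substitute for the citation, and your amplification argument that $\Mat_2(\ca(\bbF_3))$ inherits RFD from Choi's theorem \cite{Choi} is exactly what the paper leaves implicit.
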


\begin{proof} In \cite[Example 3.24]{DavKatMem}, we observed that $C(\Omega) \times_{\sigma} \bbF_2 \simeq \Mat_2 (\ca(\bbF_3))$, which is RFD, and so the conclusion follows from Proposition \ref{RFDder}.
\end{proof}

\section{concluding remarks and open problems}

Theorem \ref{mainder} does not extend to local derivations with range on a bimodule rather than the tensor algebra itself. The counterexample is a actually on a semicrossed product bimodule.

\begin{example} \label{countsemi}
Let $(\Omega , \sigma)$ be a dynamical system on a locally compact Hausdorff space and assume that $\sigma$ has a fixed point $x  \in \Omega$. Then there exists a finite dimensional $C_0(\Omega) \times_{\sigma} \bbZ^{+}$-module $\fB$ and a local derivation $\delta: C_0(\Omega) \times_{\sigma} \bbZ^{+} \rightarrow \fB$ which is not a derivation.

Indeed, let $\fB$ and $\delta_{\fB}$ be as in Example \ref{nonder}. Since $x  \in \Omega$ is a fixed point for $\sigma$ the mapping
\[
\pi : C_0(\Omega) \times_{\sigma} \bbZ^{+} \longrightarrow \fB;\, \sum_{n=0}^{\infty}V^nf_n\longmapsto
\begin{pmatrix}
f_0(x) & f_1(x) &f_2 (x)\\
0 & f_0 (x) & f_1(x) \\
0 & 0 & f_0(x)
\end{pmatrix}
\]
is a representation that makes $\fB$ a right $C_0(\Omega) \times_{\sigma} \bbZ^{+}$-module with the natural action coming from $\pi$. If
\[
\delta: C_0(\Omega) \times_{\sigma} \bbZ^{+} \longrightarrow \fB;\, A \longmapsto \delta_{\fB}(\pi(A)),
\]
then $\delta$ is a local derivation which is not a derivation (otherwise $\delta_{\fB}$ would be a derivation as well).
\end{example}

A modification of the above example also works for the non-commutative disc algebra $\A_n$, i.e., the tensor algebra of the graph with one vertex and $n$-loops, thus showing the failure of Theorem \ref{mainder} for local derivations on $\A_n$-modules as well. It turns out that using Example \ref{Crist}, we can construct yet another $\A_n$-module to demonstrate that failure.

\begin{example} Let $\A_n$, $n \geq 2$, be the non-commutative disc algebra with generators $A_1, A_2, \dots , A_n$ and let $\fC$, $\delta_{\fC}$ be as in Example \ref{Crist}. From the dilation theory for row contractions, it follows that there exists a representation $\pi:\A_n \rightarrow \Mat_3 (\bbC)$ so that
\[
\pi(A_1) = E_{1 2}, \pi(A_2)= E_{2 3} \mbox{ and } \pi(A_i) = 0, \mbox{ for all other } i.
\]
 If
\[
\delta: \A_n \longrightarrow \fC;\, A \longmapsto \delta_{\fC}(\pi(A)),
\]
then $\delta$ is a local derivation from $\A_n$ onto the $\A_n$-module $\fC = \pi(\A_n)$ which is not a derivation.
\end{example}

The above example shows that $\A_2$ admits local derivations on modules of dimension $3$ and $4$, which are not derivations. Does this persist on higher dimensions? One is tempted to guess that the answer is affirmative but we do not have a systematic way of producing such examples. What about semicrossed product modules? Can one construct such counterexamples  as Example \ref{countsemi} with modules of dimension higher than 3? Also we wonder whether any counterexamples do exist in the case where $\sigma$ is free.

It goes without saying that the following two open problems are tantalizing.

\begin{problem} Prove or disprove that a local left multiplier on the tensor algebra of a $\ca$-correspondence is actually a left multiplier.
\end{problem}

\begin{problem} Prove or disprove that an approximately local derivation on an arbitrary semicrossed product is actually a derivation.
\end{problem}

\vspace{0.1in}

{\noindent}{\it Acknowledgement.} A good part of this paper materialized while the author was visiting the Chinese Academy of Sciences in May 2014. The author would like to thank his hosts Professors Liming Ge and Wei Yuan for the stimulating conversations and their hospitality during his stay.


\end{document}